\newcommand{\I}{\mbox{${\rm I\!I}$}}
\newtheorem{theorem}{Theorem}
\newtheorem{lemma}[theorem]{Lemma}
\newtheorem{corollary}[theorem]{Corollary}
\newtheorem{definition}{Definition}
\newtheorem{example}{Example}
\newtheorem{remark}{Remark}
\newtheorem{condition}{Condition}
\def\XX{{\mathbf X}}
\def\YY{{\mathbf Y}}
\def\xx{{\mathbf x}}
\def\uu{{\mathbf u}}
\def\aa{{\mathbf a}}
\def\bb{{\mathbf b}}
\def\I{{\mathbb I}}
\begin{document}

\title{ \Large\bf Directional $\rho$-coefficients}
\author{Enrique de Amo$^{\rm a}$, David García-Fernández$^{\rm b,}\footnote{Corresponding author}$, Manuel Úbeda-Flores$^{\rm a}$\bigskip\\
\small{$^{\rm a}$Department of Mathematics, University of Almería, 04120 Almería, Spain}\\
\small{\texttt{edeamo@ual.es,\,\,\,mubeda@ual.es}}\\
\small{$^{\rm b}$Research Group of Theory of Copulas and Applications, University of Almería, 04120 Almería, Spain}\\
\small{\texttt{davidgfret@correo.ugr.es}}\\
}
\maketitle

\begin{abstract}
    In this paper we obtain advances for the concept of directional $\rho$-coefficients, originally defined for the trivariate case in [Nelsen, R.B., \'Ubeda-Flores, M. (2011). ``Directional dependence in multivariate distributions''. {\it Ann. Inst. Stat. Math} {\bf 64}, 677--685] by extending it to encompass arbitrary dimensions and directions in multivariate space. We provide a generalized definition and establish its fundamental properties. Moreover, we resolve a conjecture from the aforementioned work by proving a more general result applicable to any dimension, correcting 
   a result in [Garc\'ia, J.E., Gonz\'alez-L\'opez, V.A., Nelsen, R.B. (2013). ``A new index to measure positive dependence in trivariate distributions''. {\it J. Multivariate Anal.} {\bf 115}, 481--495] an erratum in the current literature. Our findings contribute to a deeper understanding of multivariate dependence and association, offering novel tools for detecting directional dependencies in high-dimensional settings. Finally, we introduce nonparametric estimators, based on ranks, for estimating directional $\rho$-coefficients from a sample.
\end{abstract}

\indent {\it Keywords}: copula, directional $\rho$-coefficients, multivariate dependence, nonparametric estimators, Spearman's rho.

{\it MSC(2020)}: 62H05; 62H12.

\section{Introduction}\label{sec:copulas}

$\phantom{999}$Fisher, in his article on copulas within the {\it Encyclopedia of Statistical Science} \cite{Fisher97}, notes: ``Copulas [are] of interest to statisticians for two main reasons: First, as a way of studying scale-free measures of dependence; and secondly, as a starting point for
constructing families of bivariate distributions$\ldots$''

On the other hand, as Jodgeo \cite{Jodgeo82} points out: ``Dependence relations between random variables is one of the most studied subjects in probability and statistics.''

Dependence measures play a crucial role in multivariate statistics, allowing researchers to quantify the relationships between random variables beyond simple pairwise associations. Traditional measures, such as Spearman’s rho and Kendall’s tau, provide useful insights but may fail to detect complex dependency structures in higher dimensions.

To address this issue, the directional $\rho$-coefficients were introduced in \cite{Ne2011} as a tool to measure association in specific directions within trivariate distributions. These coefficients, based on copula theory, capture dependence structures that remain undetected by classical multivariate association measures. However, their study was limited to the three-dimensional case, leaving open the question of whether such coefficients in higher dimensions could be expressed as a linear combination in terms of $\rho$-coefficients in lower dimensions.

This paper extends the concept of directional $\rho$-coefficients to encompass arbitrary dimensions and directions within a multivariate setting. We establish a robust mathematical framework for their definition and demonstrate key properties that validate their practical application. Furthermore, we address a conjecture posed in \cite{Ne2011} by presenting a generalized proof, applicable across all dimensions, leveraging a result from \cite{Edeamo}.

The structure of this paper is as follows: Section \ref{sec:prel} presents the necessary preliminaries on copulas, measures of association and concepts of dependence. Section \ref{sec:main} introduces the generalization of directional $\rho$-coefficients to higher dimensions, we establish their key properties and explore their theoretical implications, alongside providing a more general response to the conjecture presented in \cite{Ne2011} and correcting \cite[Equation (10)]{Gar2013}. In Section \ref{sec:esti}, we present nonparametric rank-based estimators with the objective of estimating the values of the directional $\rho$-coefficients from the observed data in a sample. Potential applications are given in Section \ref{sec:appl}. Finally, Section \ref{sec:conc} is devoted to conclusions.

\section{Preliminaries}\label{sec:prel}

$\phantom{999}$In this section we will outline some relevant known results. Specifically, we will dedicate three subsections to: copulas, measures of association, and concepts of dependence.

\subsection{Copulas and Sklar's theorem}

$\phantom{999}$Let $n\geq 2$ be a natural number. An $n$-dimensional copula (for short, $n$-copula), $C$, is the restriction to $\I^n$ ($=[0,1]^n$) of a continuous $n$-dimensional distribution functions whose univariate margins are uniform on $\I$. Equivalently, we have the following definition:

\begin{definition}An $n$-copula $C$ is a function $C:\I^n\longrightarrow \I$ satisfying the following properties:
    \begin{itemize}
        \item [i)] $C(\uu)=0$ for every $\uu\in \I^n$ such that at least one coordinate of $\uu$ is equal to $0$.
        \item [ii)] $C(\uu)=u_k$ whenever all coordinates of $\uu$ are $1$ except, maybe, $u_k$.
        \item [iii)] For every $\aa=(a_1,a_2,\ldots,a_n)$ and $\bb=(b_1,b_2,\ldots,b_n)\in \I^n$, such that $a_k\leq b_k$ for all $k=1,\ldots,n$,
        $$V_C([\aa,\bb])=\sum_{{\bf c}}sgn({\bf c})C({\bf c})\geq 0,$$
         where $[\aa,\bb]$ denotes the $n$-box $\times_{i=1}^n[a_i,b_i]$ and the sum is taken over the vertices ${\bf c}=(c_1,c_2,\ldots,c_n)$ of $[\aa,\bb],$ i.e., each $c_k$ is equal to either $a_k$ or $b_k$, and $sgn({\bf c})=1$ if $c_k=a_k$ for an even number of $k$'s, and $sgn({\bf c})=-1$ otherwise.
    \end{itemize}
\end{definition}

The importance of copula theory lies in Sklar's theorem \cite{Sk59} ---for a complete proof of this result, see \cite{Ub2017}.

\begin{theorem}[Sklar]
Let $\XX=(X_1,X_2,...,X_n)$ be a random $n$-vector with joint distribution function $H$ and one-dimensional marginal distributions $F_1,F_2,...,F_n$. Then there exists an $n$-copula (uniquely determined on $\times_{i=1}^n\text{ Range } (F_i)$) such that
$$H(\xx)=C(F_1(x_1),F_2(x_2),...,F_n(x_n))$$
for all $\xx\in[-\infty,+\infty]^n$. If all the marginals $F_i$ are continuous, then the $n$-copula is unique.
\end{theorem}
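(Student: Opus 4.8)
The plan is to split the argument into the case of continuous margins, where an explicit formula is available, and the general case, whose difficulty is to extend a function defined only on a product of ranges to a genuine copula. First I would assume all $F_i$ are continuous and, using the quasi-inverse $F_i^{(-1)}(u)=\inf\{x\in\R: F_i(x)\geq u\}$, define
$$C(\uu)=H\bigl(F_1^{(-1)}(u_1),\ldots,F_n^{(-1)}(u_n)\bigr),\qquad \uu\in\I^n.$$
Checking that $C$ is an $n$-copula is routine: properties (i) and (ii) follow from the boundary behaviour of $H$ together with the identity $F_i(F_i^{(-1)}(u))=u$ (valid since $F_i$ is continuous), while property (iii) is inherited from $H$ because each map $u_i\mapsto F_i^{(-1)}(u_i)$ is nondecreasing, so the $C$-volume of a box equals the $H$-volume of its image box. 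The Sklar identity then reduces to showing that $H$ is unchanged when each $x_i$ is replaced by $F_i^{(-1)}(F_i(x_i))$, which holds because $F_i$ is constant between these two points and the corresponding marginal mass vanishes. Since a continuous $F_i$ has $\mathrm{Range}(F_i)=\I$ by the intermediate value theorem, the identity pins $C$ down on all of $\I^n$, giving uniqueness in this case.

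For arbitrary margins I would first define a \emph{subcopula} $C'$ on $S=\times_{i=1}^n\mathrm{Range}(F_i)$ by $C'(F_1(x_1),\ldots,F_n(x_n))=H(\xx)$; the same ``constant margin implies no mass'' observation gives well-definedness, and $C'$ inherits groundedness, uniform margins on $S$, and the $n$-increasing property from $H$. This same identity shows that $C$ is determined on $S$, which is precisely the stated uniqueness assertion. The crux, and the step I expect to be the main obstacle, is the extension lemma: every subcopula extends to a full $n$-copula agreeing with it on $S$. I would establish this by multilinear interpolation across the gaps $\I\setminus\mathrm{Range}(F_i)$, and the delicate point is to verify that the interpolant remains $n$-increasing; for $n>2$ it is cleanest to interpolate one coordinate at a time and argue inductively that the $n$-increasing property survives each step, since controlling the sign of the full $2^n$-term box volume directly becomes unwieldy.

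As an alternative that avoids the extension lemma entirely, I would mention the distributional-transform construction. Taking $V$ uniform on $\I$ and independent of $\XX$, set $U_i=F_i(X_i^-)+V\bigl(F_i(X_i)-F_i(X_i^-)\bigr)$; one checks that each $U_i$ is uniform on $\I$ and that $F_i^{(-1)}(U_i)=X_i$ almost surely. The joint distribution function $C$ of $(U_1,\ldots,U_n)$ is then automatically a copula, and the Galois relation $F_i^{(-1)}(U_i)\leq x_i \iff U_i\leq F_i(x_i)$ delivers $H(\xx)=C(F_1(x_1),\ldots,F_n(x_n))$ directly and in full generality.
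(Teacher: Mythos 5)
The paper does not prove this statement: Sklar's theorem is quoted as a classical result, with the proof deferred to the cited references (Sklar 1959 and \'Ubeda-Flores--Fern\'andez-S\'anchez 2017), so there is no in-paper argument to compare yours against. Your outline is the standard one and is sound in its overall architecture: the quasi-inverse formula $C(\uu)=H(F_1^{(-1)}(u_1),\ldots,F_n^{(-1)}(u_n))$ for continuous margins, the subcopula on $\times_{i=1}^n\mathrm{Range}(F_i)$ plus an extension lemma in general, and the distributional transform as a self-contained alternative that bypasses the extension entirely. Two points deserve attention. First, in the extension step you interpolate ``across the gaps $\I\setminus\mathrm{Range}(F_i)$,'' but the ranges need not be closed (e.g.\ a margin that increases continuously up to a value it never attains before jumping), so the gap endpoints may not lie in the subcopula's domain; you must first extend the subcopula to $\times_{i=1}^n\overline{\mathrm{Range}(F_i)}$ using the Lipschitz inequality $|C'(\uu)-C'(\mathbf{v})|\leq\sum_{i=1}^n|u_i-v_i|$, and only then interpolate. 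Second, you explicitly defer the verification that the coordinate-by-coordinate multilinear interpolant remains $n$-increasing; your inductive strategy (the volume of a box whose first-coordinate endpoints lie in a common gap is a nonnegative multiple of the volume of the box with endpoints at the gap boundary, and general boxes split into such pieces) does work, but as written this is the acknowledged crux and would need to be carried out for the proof to be complete. With those caveats, the proposal matches the proof in the literature the paper relies on.
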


For instance, some of the most known copulas are the product copula $\Pi_n( \uu)=\prod_{i=1}^nu_i$ ---which characterizes the independence of random variables and the subscript indicates dimension---, $M_n(\uu):=\min({\bf u})$ and $W_n(\uu)=\max\{\sum_{i=1}^nu_i-n+1,0\}$ for all ${\bf u}$ in $\I^n$. $W_n$ is an $n$-copula only when $n=2$; and for any $n$-copula $C$ we have $W_n\le C\le M_n$, so they are respectively known as the lower and upper Fr\'echet-Hoeffding bounds. Finally, let $\widehat{C}$ be the {\it survival n-copula} of $C$, i.e., $\widehat{C}({\bf u})=\mathbb{P}[{\bf U}>{\bf 1-u}]$, where {\bf U} is a random $n$-vector whose components are uniform $\I$ and with associated $n$-copula $C$. For further information about copulas, see \cite{Durante2016book,Ne06}.

\subsection{Measures of association}

$\phantom{999}$In the case of two variables, a measure of association quantifies both the strength and direction of their relationship. A value close to +1 indicates that high (or low) values of both variables tend to appear together, whereas a value near $-1$ suggests that large values of one variable are typically associated with small values of the other. One of the best-known measures of association for two random variables $(X,Y)$ is Spearman's rho. This measure relies solely on the copula $C$ associated with the pair $(X,Y)$. So we can denote it by $\rho(C)$, and its expression is given by 
$$\rho(C)=12\int_{\I^2}C(u,v){\rm d}u{\rm d}v-3.$$
Consequently, when analyzing their properties, we can assume that the random variables $X$ and $Y$ are uniformly distributed on $I$. For a detailed exploration of their characteristics, refer to \cite{Ne06} and the sources cited therein.

In this paper, we will focus on an extension of $\rho(C)$ in the context of multiple variables, where the analysis becomes more complex. To simplify and illustrate the problem, we focus on the case involving three variables. A common approach to measuring association in this setting is to take the average of the three pairwise association measures, denoted by $\rho_3^*$. However, this method is often insufficient in capturing the overall dependence among all the three variables. It is well-established that pairwise independence among a set of three random variables $(X,Y,Z)$ does not, in general, imply their mutual independence (see \cite[Example 1]{Ne2011}).

\subsection{Directional dependence}

$\phantom{999}$Among the most widely recognized concepts describing potential dependence relationships between random variables are positive (respectively, negative) upper orthant dependence (PUOD) (respectively, NUOD), and positive (respectively, negative) lower orthant dependence (PLOD) (respectively, NLOD). The concepts of PUOD and NUOD intuitively describe how a set of random variables $X_1, X_2,\ldots, X_n$ deviate from independence in terms of their likelihood of attaining large values simultaneously. Specifically, under PUOD, these variables are more prone to exhibit high values together compared to a collection of independent random variables with the same marginal distributions, whereas under NUOD, this tendency is weaker (for some results about positive dependece properties using copulas see \cite{Joe1997,Mu06,Ne06,Wei2014}). However, dependence among random variables can be characterized in other ways. For instance, considering a subset $J\subset I=\{1,2,\ldots,n\}$ one might observe a dependence structure where large (or small) values of the variables $\{X_j:j\in J\}$ are systematically associated with small (or large) values of the remaining variables $\{X_j: j\in I\setminus J\}$. This idea was explored in \cite{Que2012}, leading to the following definition.

\begin{definition}[PD$(\alpha)$ dependence concept]
    Let $\XX$ be a continuous random vector with joint distribution function $H$, and let $\alpha=(\alpha_1,\alpha_2,\ldots,\alpha_n)\in\mathbb{R}^n$ such that $|\alpha_i|=1$ for all $i=1,2,\ldots,n$. We say that $\XX$ (or $H$) is (orthant) positive (respectively, negative) dependent according to the direction $\alpha$ ---denoted by PD$(\alpha)$ (respectively, ND$(\alpha)$)--- if, for every $\xx\in[-\infty,+\infty]^n$,
\begin{equation}\label{eq:PDalpha}
\mathbb{P}\left[\bigcap_{i=1}^n(\alpha_i X_i>x_i)\right]\geq \prod_{i=1}^n\mathbb{P}[\alpha_i X_i> x_i]
\end{equation}
(respectively, by reversing the sense of the inequality ``$\ge$'' in \eqref{eq:PDalpha}).
\end{definition}

Following from the definition above, the concept PD$({\bf 1})$ (respectively, PD$({\bf -1})$), where ${\bf 1}=(1,1,\ldots,1)$ corresponds to the PUOD (respectively, PLOD); an similarly for the corresponding negative concepts. For more details, see \cite{Que2012,Que2024}.

Based on the PD$(\alpha)$ dependence concept, a dependence order is provided  in \cite{Edeamo2024} with the aim of comparing the strength of the positive dependence in a particular direction of two random vectors.

\begin{definition}[PD$(\alpha)$ order]\label{def:PDorder}
Let $\XX$ and $\YY$ be two random $n$-vectors with respective distribution functions $F$ and $G$. Let $\alpha\in\mathbb{R}^n$ such that $|\alpha_i|=1$ for all $i=1,2,\ldots,n$. $\XX$ is said to be {\it smaller than} $\YY$ {\it in the positive dependence order according to the direction} $\alpha$, denoted by $\XX\leq_{{\rm PD}(\alpha)}\YY$, if, for every $\xx\in\mathbb{R}^n$, we have 
$$\mathbb{P}\left[\bigcap_{i=1}^n(\alpha_iX_i>x_i)\right]\leq\mathbb{P}\left[\bigcap_{i=1}^n(\alpha_iY_i>x_i)\right].$$
\end{definition}

\section{Multivariate directional $\rho$-coefficients}\label{sec:main}

The directional $\rho$-coefficients were first introduced in \cite{Ne2011} order to measure directional dependence for the case $n=3$, where $(X,Y,Z)$ are three random variables with associated copula $C$. For each direction $\alpha\in\mathbb{R}^3$ such that $|\alpha_i|=1$ for $i=1,2,3$, they were defined by
$$\rho_3^{(\alpha_1,\alpha_2,\alpha_3)}(C)=8\int_{\I^3}Q_{(\alpha_1,\alpha_2,\alpha_3)}(u,v,w){\rm d}u{\rm d}v{\rm d}w,$$
where $$Q_{(\alpha_1,\alpha_2,\alpha_3)}(u,v,w)=\mathbb{P}[\alpha_1X>\alpha_1u, \alpha_2Y>\alpha_2v,\alpha_3Z>\alpha_3w]-\mathbb{P}[\alpha_1X>\alpha_1u]\mathbb{P}[\alpha_2Y>\alpha_2v]\mathbb{P}[\alpha_3Z>\alpha_3w]$$ for all $u,v,w\in \I$.

As is noted in \cite[Remark 3]{Ne2011}, $\rho_3^{\alpha}$ is not a multivariate measure of association, in general.



From now on, we will focus on the multidimensional case. In what follows, $\XX$ will be an $n$-dimensional random vector whose marginals are uniform on $[0,1]$.

Inspired by \cite{Ne96,Ne2011,Que2012} we introduce directional $\rho$-coefficients of association as in \cite{Gar2013}.

\begin{definition}
    Let $\XX$ be an $n$-dimensional random vector with associated $n$-copula $C$. The directional $\rho$-coefficients are defined for each direction $\alpha\in\mathbb{R}^n$, with $|\alpha_i|=1$ for all $i=1,\ldots,n$, as
\begin{equation}\label{def:rhoco}\rho_n^\alpha(C)=\frac{2^n(n+1)}{2^n-(n+1)}\int_{\I^n}Q_\alpha(u_1,\ldots,u_n){\rm d}u_1\cdots {\rm d}u_n,
\end{equation}
where $$Q_\alpha(u_1,\ldots,u_n)=\mathbb{P}\left[\bigcap_{i=1}^n(\alpha_iX_i>\alpha_iu_i)\right]-\prod_{i=1}^n\mathbb{P}[\alpha_iX_i>\alpha_iu_i].$$
\end{definition}

It should be noted that the directional coefficients for directions $\alpha={\bf -1}$ and $\alpha={\bf 1}$ were defined for the first time in \cite{Ne96} in terms of copulas as it follows 
$$\rho_n^-(C)=\frac{n+1}{2^n-(n+1)}\left(2^n\int_{\I^n}C(\uu){\rm d}\uu-1\right)\hspace{0.5cm}\text{and}\hspace{0.5cm}\rho_n^+(C)=\frac{n+1}{2^n-(n+1)}\left(2^n\int_{\I^n}\Pi_n(\uu){\rm d}C(\uu)-1\right),$$
respectively.

We can rewrite the probabilities using indicator functions and expectations. Let $\mathbf{1}(A)$ denote the indicator function of event $A$. Applying this, we get:
$$\rho_n^\alpha(C)=\frac{2^n(n+1)}{2^n-(n+1)}\int_{\mathbb{I}^n}\left(\mathbb{E}\left[\prod_{i=1}^n \mathbf{1}(\alpha_iX_i > \alpha_ix_i)\right]-\prod_{i=1}^n \mathbb{E}[\mathbf{1}(\alpha_iX_i > \alpha_ix_i)]\right){\rm d}x_1\cdots {\rm d}x_n$$
Here, the expectation $\mathbb{E}$ is taken with respect to the joint distribution of the random vector ${\bf X}$ under the copula $C$. For uniform marginals, $\mathbb{E}[\mathbf{1}(\alpha_iX_i > \alpha_ix_i)] = 1/2$. Thus, the formula can be written as:
$$\rho_n^\alpha(C)=\frac{2^n(n+1)}{2^n-(n+1)}\left( \mathbb{E}\left[\prod_{i=1}^n \mathbf{1}(\alpha_iX_i > \alpha_iU_i)\right] - \frac{1}{2^n} \right).$$

Some of the main properties of these coefficients are listed in the following result.

\begin{corollary}
    Let $\XX$ be an $n$-dimensional random vector with associated $n$-copula $C$, and let $\alpha\in\mathbb{R}^n$ such that $\alpha_i\in\{-1,1\}$ for all $i=1,\ldots,n$. Then:
    \begin{itemize}
        \item [i)] $\sum_{\alpha}\rho_n^\alpha(C)=0$.  
        \item [ii)] $\rho_n^\alpha(C)=\rho_n^{-\alpha}(\widehat{C})$.
    \end{itemize}
\end{corollary}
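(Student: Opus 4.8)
The plan is to work entirely from the expectation representation
$$\rho_n^\alpha(C)=\frac{2^n(n+1)}{2^n-(n+1)}\left( \mathbb{E}\left[\prod_{i=1}^n \mathbf{1}(\alpha_iX_i > \alpha_iU_i)\right] - \frac{1}{2^n} \right)$$
derived immediately above the statement, where $\XX$ has copula $C$ and $\uu=(U_1,\ldots,U_n)$ is an independent uniform vector playing the role of the integration variables. Both identities reduce to manipulating only the expectation term, since the constant prefactor and the $2^{-n}$ are common to all directions.

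For part i), I would sum the displayed expression over the $2^n$ directions $\alpha\in\{-1,1\}^n$. The $2^{-n}$ terms contribute $2^n\cdot 2^{-n}=1$. For the expectation part, I would exploit that the margins are continuous, so $X_i\neq U_i$ almost surely and hence $\mathbf{1}(X_i>U_i)+\mathbf{1}(X_i<U_i)=1$ a.s. Since the event $\alpha_iX_i>\alpha_iU_i$ reads $X_i>U_i$ when $\alpha_i=1$ and $X_i<U_i$ when $\alpha_i=-1$, interchanging the (finite) sum with the expectation yields
$$\sum_\alpha\prod_{i=1}^n\mathbf{1}(\alpha_iX_i>\alpha_iU_i)=\prod_{i=1}^n\bigl(\mathbf{1}(X_i>U_i)+\mathbf{1}(X_i<U_i)\bigr)=1\quad\text{a.s.},$$
so the summed expectation equals $1$. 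The difference $1-1=0$ then gives $\sum_\alpha\rho_n^\alpha(C)=0$.

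For part ii), the key input is that the survival copula $\widehat{C}$ is the copula of the reflected vector $\mathbf{1}-\XX$. Writing the defining integral of $\rho_n^{-\alpha}(\widehat{C})$ in terms of a vector $\mathbf{V}=\mathbf{1}-\XX$ and integration variables $\mathbf{w}$, I would track the sign changes in the event $-\alpha_iV_i>-\alpha_iw_i$: substituting $V_i=1-X_i$ turns it into $\alpha_iX_i>\alpha_i(1-w_i)$, and the same substitution applies to each marginal factor of $Q$. Performing the change of variables $u_i=1-w_i$, which leaves $\I^n$ and the uniform law invariant, converts $Q_{-\alpha}^{\widehat{C}}(\mathbf{w})$ into exactly $Q_\alpha^{C}(\mathbf{u})$, so the integrals coincide and $\rho_n^{-\alpha}(\widehat{C})=\rho_n^\alpha(C)$.

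I expect the only delicate point to be the bookkeeping in part ii): one must verify coordinatewise that the direction flip $\alpha\mapsto-\alpha$ and the reflection $X_i\mapsto 1-X_i$ compensate each other, so that the orthant event and hence both terms of $Q$ are preserved. Part i) is essentially a one-line telescoping once the almost-sure partition identity is in place, the continuity of the margins being the only hypothesis used there.
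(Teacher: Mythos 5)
Your proof is correct and takes essentially the same approach as the paper: part i) rests on the fact that the $2^n$ orthant events partition the sample space almost surely, so both the joint term and the product term sum to $1$, and part ii) is the same reflection argument pairing the direction flip $\alpha\mapsto-\alpha$ with the reflection of the random vector. If anything, your part ii) is written a bit more carefully, since the paper works with $-X_i$ and $-x_i$ without explicitly performing the change of variables that brings the integration domain back to $\I^n$, whereas you use $1-X_i$ and $u_i=1-w_i$ so that everything stays on the unit cube.
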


\begin{proof}
    Part $i)$ is straightforward since we have $\sum_{\alpha}\mathbb{P}[\bigcap_{i=1}^n(\alpha_i X_i>\alpha_i x_i)]=\sum_{\alpha}\prod_{i=1}^n\mathbb{P}[\alpha_i X_i>\alpha_ix_i]=1$.
    
    For part $ii)$, we know that $\widehat{C}(\uu)=\mathbb{P}[U>1-u]=\mathbb{P}[-U<u-1]=\mathbb{P}[-U>-u]$ since $U$ is uniform in $[0,1]$. Therefore,
    \begin{align*}
        \rho_n^\alpha(C)&=\frac{2^n(n+1)}{2^n-(n+1)}\int_{\mathbb{I}^n}\left(\mathbb{P}\left[\bigcap_{i=1}^n(\alpha_iX_i>\alpha_ix_i)\right]-\prod_{i=1}^n\mathbb{P}[\alpha_iX_i>\alpha_ix_i]\right){\rm d}x_1\cdots {\rm d}x_n\\
        &=\frac{2^n(n+1)}{2^n-(n+1)}\int_{\mathbb{I}^n}\left(\mathbb{P}\left[\bigcap_{i=1}^n(-\alpha_i(-X_i)>-\alpha_i(-x_i))\right]-\prod_{i=1}^n\mathbb{P}[-\alpha_i(-X_i)>-\alpha_i(-x_i)]\right){\rm d}x_1\cdots {\rm d}x_n\\
        &=\rho_n^{-\alpha}(\widehat{C}),
    \end{align*}
    as desired.
\end{proof}

Next, we provide some examples of multivariate $\rho$-coefficients for some well-known families of $n$-copulas.

\begin{example}[Directional $\rho$-coefficients for $\Pi_n$]  If $C=\Pi_n$ then we have $\rho_n^\alpha(\Pi_n)=0$ since, in this case, $\mathbb{P}\left[\bigcap_{i=1}^n(\alpha_i X_i>x_i)\right]=\prod_{i=1}^n\mathbb{P}[\alpha_i X_i> x_i]$.
\end{example}

\begin{example}[Directional $\rho$-coefficients for $M_n$]
For the minimum copula $C=M_n$, we have $X_1 = X_2 =\cdots = X_n$ in distribution. Since the marginals of the copula are uniformly distributed on $[0,1]$, the associated random variables $X_i$ are also uniformly distributed on $[0,1]$; therefore, we consider
\begin{equation}\label{eq:Probab}
v_i(u_i, \alpha_i) := \mathbb{P}[\alpha_i X_i > \alpha_i u_i] = \begin{cases} 1 - u_i & \text{if } \alpha_i = 1, \\ u_i & \text{if } \alpha_i = -1. \end{cases}
\end{equation}
We consider the integral
$$I_n(\alpha) = \int_{\I^n} \left( \mathbb{P}\left[\bigcap_{i=1}^n(\alpha_iX>\alpha_iu_i)\right] - \prod_{i=1}^n v_i(u_i, \alpha_i) \right) {\rm d}u_1 \cdots {\rm d}u_n$$
for the cases where all $\alpha_i$ are the same. We study several cases depending on $k$, i.e. the number of components of $\alpha$ equal to $-1$.
\begin{enumerate}
\item $\alpha = (1, 1,\ldots, 1)$ ($k=0$).
The joint probability is $\mathbb{P}\left[\bigcap_{i=1}^n(X > u_i)\right] = 1 - \max(u_1, \ldots, u_n)$.
The product of marginal probabilities is $\prod_{i=1}^n (1 - u_i)$.
The integral becomes:
$$
I_n((1,\ldots,1)) = \int_{\I^n} \left( 1 - \max(u_1, \ldots, u_n) - \prod_{i=1}^n (1 - u_i) \right) {\rm d}u_1 \cdots {\rm d}u_n
$$
Using the known results
$$\int_{\I^n} (1 - \max(u_1, \ldots, u_n)) {\rm d}u_1 \cdots {\rm d}u_n = \frac{1}{n+1}$$
and
$$\int_{\I^n} \prod_{i=1}^n (1 - u_i) {\rm d}u_1 \cdots {\rm d}u_n = \left(\frac{1}{2}\right)^n,$$
we get:
$$
I_n((1,\ldots,1)) = \frac{1}{n+1} - \frac{1}{2^n}.
$$
Substituting into the definition of $\rho_n^\alpha(M_n)$:
$$
\rho_n^{(1,1,\ldots,1)}(M_n) = \frac{2^n(n+1)}{2^n-(n+1)} \left( \frac{1}{n+1} - \frac{1}{2^n} \right) = \frac{2^n(n+1)}{2^n-(n+1)} \frac{2^n - (n+1)}{(n+1)2^n} = 1.
$$
\item $\alpha = (-1,-1, \ldots, -1)$ ($k=n$).
The joint probability is $\mathbb{P}\left[\bigcap_{i=1}^n(X < u_i)\right] = \min(u_1, \ldots, u_n)$.
The product of marginal probabilities is $\prod_{i=1}^n u_i$.
The integral becomes:
$$
I_n((-1,\ldots,-1)) = \int_{\I^n} \left( \min(u_1, \ldots, u_n) - \prod_{i=1}^n u_i \right) {\rm d}u_1 \cdots {\rm d}u_n
$$
Using the known result
$$\int_{\I^n} \min(u_1, \ldots, u_n) {\rm d}u_1 \cdots {\rm d}u_n = \frac{1}{n+1},$$
we get:
$$
I_n((-1,\ldots,-1)) = \frac{1}{n+1} - \frac{1}{2^n}.
$$
Substituting into the definition of $\rho_n^\alpha(M_n)$:
$$
\rho_n^{(-1,-1,\ldots,-1)}(M_n) = \frac{2^n(n+1)}{2^n-(n+1)} \left( \frac{1}{n+1} - \frac{1}{2^n} \right) = \frac{2^n(n+1)}{2^n-(n+1)} \frac{2^n - (n+1)}{(n+1)2^n} = 1.
$$
\item $k$ is even ($0 < k < n$). When $k$ is even, we have $k$ variables where we are looking at the probability that $-X_i > -u_i$ (i.e., $X_i < u_i$) and $n-k$ variables where we are looking at $X_j > u_j$. Due to the perfect positive dependence ($X_i = X \sim U[0,1]$), the joint probability is
$$\mathbb{P}\left(X < \min_{i \in I} u_i \text{ and } X > \max_{j \in J} u_j\right) = \max\left(0, \min_{i \in I} u_i - \max_{j \in J} u_j\right).$$
The integral $I_n(\alpha)$ involves the integral of this joint probability minus the product of the marginal probabilities. The integration of $\max\left(0, \min_{i \in I} u_i - \max_{j \in J} u_j\right)$ over the unit hypercube yields
$$ \frac{k! (n-k)!}{(n+1)!}.$$
Subtracting the integral of the product of marginals, which is $\left(\frac{1}{2}\right)^n$, we get:
$$I_n(\alpha) = \frac{k! (n-k)!}{(n+1)!} - \frac{1}{2^n} .$$
Multiplying by the normalization factor $\frac{2^n(n+1)}{2^n-(n+1)}$, we obtain
$$\rho_n^\alpha(M_n) = \frac{2^n(n+1)}{2^n-(n+1)} \left( \frac{k! (n-k)!}{(n+1)!} - \frac{1}{2^n} \right)$$
for even $k$.
\item $k$ is odd ($0 < k < n$). When $k$ is odd, the joint probability is $$\mathbb{P}\left[\bigcap_{i=1}^n(\alpha_iX>\alpha_iu_i)\right] = \max\left(0, \min_{j \in J} u_j - \max_{i \in I} u_i\right),$$
where $|I| = k$ and $|J| = n-k$. The formula for $\rho_n^\alpha(M_n)$ is the same as the one derived for the even case of $k$. The difference in the directional dependence for odd $k$ arises from the structure of the joint probability involving $\min_{j \in J} u_j - \max_{i \in I} u_i$, which effectively reverses the roles of the minimum and maximum compared to the even $k$ case where we had $\min_{i \in I} u_i - \max_{j \in J} u_j$. However, the integral of $\max(0, a-b)$ is the same as the integral of $\max(0, b-a)$ over the appropriate domains, leading to the same formula for $I_n(\alpha)$ and consequently for $\rho_n^\alpha(M_n)$.
\end{enumerate}
In summary, where $k$ is the number of $-1$ in $\alpha$:
$$
\rho_n^\alpha(M_n) = \begin{cases}
1 & \text{if } k = 0 \text{ or } k = n, \\
\displaystyle\frac{2^n(n+1)}{2^n-(n+1)} \left( \frac{k! (n-k)!}{(n+1)!} - \frac{1}{2^n} \right) & \text{otherwise}.
\end{cases}
$$
\end{example}

\begin{example}[Directional $\rho$-coefficients for the Farlie-Gumbel-Morgenstern (FGM) family of $n$-copulas] \label{ex:FGM}Let us consider the FGM family of $n$-copulas given by
$$C_n^{\lambda}(u_1,\ldots,u_n)=\prod_{i=1}^nu_i\left(1+\lambda\prod_{i=1}^n(1-u_i)\right),\hspace{0.6cm}\lambda\in[-1,1]$$
(see \cite{Durante2016book,Ne06}). From \eqref{eq:Probab} we have
$$\mathbb{P}[\alpha_iX_i>\alpha_iu_i]=1-\frac{1+\alpha_i}{2}u_i-\frac{1-\alpha_i}{2}(1-u_i).$$
The joint probability $\mathbb{P}[\bigcap_{i=1}^n(\alpha_iX_i>\alpha_iu_i)]$ is given by the (directional) function 
\begin{align*}
    S^\alpha(u_1,\ldots,u_n):=&\prod_{i=1}^n\left(1-\frac{1+\alpha_i}{2}u_i-\frac{1-\alpha_i}{2}(1-u_i)\right)\\
    &\cdot\left(1+(-1)^{|J|}\lambda\prod_{i=1}^n\left(\frac{1+\alpha_i}{2}u_i+\frac{1-\alpha_i}{2}(1-u_i)\right)\right).
\end{align*}
Then, 
\begin{align*}
    Q_\alpha(u_1,\ldots,u_n)&=S^\alpha(u_1,\ldots,u_n)-\prod_{i=1}^n\mathbb{P}[\alpha_iX_i>\alpha_iu_i]\\
    &=(-1)^{|J|}\lambda\prod_{i=1}^n\mathbb{P}[\alpha_iX_i>\alpha_iu_i](1-\mathbb{P}[\alpha_iX_i>\alpha_iu_i]).
\end{align*}
Now, we calculate the integral of $Q_\alpha$ over $\I^n$:
$$\int_{\mathbb{I}^n}Q_\alpha(u_1,\ldots,u_n)du_1\cdots du_n=(-1)^{|J|}\lambda\prod_{i=1}^n\int_0^1\mathbb{P}[\alpha_iX_i>\alpha_iu_i](1-\mathbb{P}[\alpha_iX_i>\alpha_iu_i])du_i.$$
After some elementary calculations, we obtain that, for every $\alpha_i\in\{-1,1\}$,
$$\int_0^1\mathbb{P}[\alpha_iX_i>\alpha_iu_i](1-\mathbb{P}[\alpha_iX_i>\alpha_iu_i])du_i=\frac{1}{6}.$$
Thus, 
$$\int_{\mathbb{I}^n}Q_\alpha(u_1,\ldots,u_n)du_1\cdots du_n=(-1)^{|J|}\lambda\left(\frac{1}{6}\right)^n=(-1)^{|J|}\frac{\lambda}{6^n}.$$
Finally, it holds
$$\rho_n^\alpha(C_n^{\lambda})=\frac{(-1)^{|J|}2^n(n+1)\lambda}{[2^n-(n+1)]6^n}.$$
To illustrate a simple case, let $\alpha\in\mathbb{R}^4$  be a direction. Then, the respective directional $\rho$-coefficients are given by 
 $$\rho_4^\alpha(C_\lambda^4)=\begin{cases}
     -5\lambda/891, & \text{if } |J| \text{ is odd,}\\
     5\lambda/891, & \text{if } |J| \text{ is even,}\\
 \end{cases}$$
 where $J\subseteq\{1,2,\ldots,n\}$ is such that $i\in J$ if $\alpha_i=1$, for $i=1,2,\ldots,n$
 \end{example}

The following result can be trivially extracted from Definition \ref{def:PDorder} and  the expression of the directional $\rho$-coefficient given by \eqref{def:rhoco}.

\begin{corollary}
    Let $\XX$ and $\YY$ be two $n$-dimensional random vectors of continuous random variables uniformly distributed on $[0,1]$, whose respective joint distribution functions are the $n$-copulas $C_{\XX}$ and $C_{\YY}$. Let $\alpha=(\alpha_1,...,\alpha_n)\in\mathbb{R}^n$ such that $|\alpha_i|=1$ for all $i=1,...,n$. If $\XX\leq_{{\rm PD}(\alpha)}\YY$, then $\rho_n^\alpha(C_{\XX})\leq\rho_n^\alpha(C_{\YY})$.
\end{corollary}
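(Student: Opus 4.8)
The plan is to exploit that the normalisation constant in \eqref{def:rhoco} is strictly positive and that the product (marginal) terms appearing in $Q_\alpha$ coincide for $\XX$ and $\YY$, so that the comparison collapses to a pointwise comparison of the joint survival probabilities furnished directly by the PD$(\alpha)$ order. Throughout, I would write $Q_\alpha^{\XX}$ and $Q_\alpha^{\YY}$ for the function $Q_\alpha$ associated with $\XX$ and with $\YY$, respectively.

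First I would note that, since both $\XX$ and $\YY$ have uniform marginals on $[0,1]$, the one-dimensional factors agree: by \eqref{eq:Probab},
$$\mathbb{P}[\alpha_i X_i > \alpha_i u_i] = \mathbb{P}[\alpha_i Y_i > \alpha_i u_i] = v_i(u_i,\alpha_i)$$
for every $i$ and every $u_i\in\I$. Hence the products $\prod_{i=1}^n\mathbb{P}[\alpha_iX_i>\alpha_iu_i]$ and $\prod_{i=1}^n\mathbb{P}[\alpha_iY_i>\alpha_iu_i]$ are equal, and the difference of the two integrands reduces to
$$Q_\alpha^{\YY}(u_1,\ldots,u_n)-Q_\alpha^{\XX}(u_1,\ldots,u_n)=\mathbb{P}\left[\bigcap_{i=1}^n(\alpha_iY_i>\alpha_iu_i)\right]-\mathbb{P}\left[\bigcap_{i=1}^n(\alpha_iX_i>\alpha_iu_i)\right].$$

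Next I would apply the hypothesis $\XX\leq_{{\rm PD}(\alpha)}\YY$. Since Definition \ref{def:PDorder} holds for every $\xx\in\mathbb{R}^n$, it holds in particular at the points $x_i=\alpha_iu_i$ (legitimate because $\alpha_i\in\{-1,1\}$), whence the displayed difference is nonnegative for all $(u_1,\ldots,u_n)\in\I^n$. Integrating this pointwise inequality over $\I^n$ preserves the sign, so $\int_{\I^n}Q_\alpha^{\XX}\,{\rm d}u\le\int_{\I^n}Q_\alpha^{\YY}\,{\rm d}u$. Finally I would multiply by the normalisation factor, observing that for $n\ge2$ one has $2^n>n+1$, so that $2^n-(n+1)>0$ and the constant $2^n(n+1)/(2^n-(n+1))$ is strictly positive; multiplying through then yields $\rho_n^\alpha(C_{\XX})\le\rho_n^\alpha(C_{\YY})$. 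There is no substantial obstacle here: the only points requiring care are the cancellation of the marginal terms (which rests entirely on the shared uniform marginals) and the sign of the normalisation constant, both of which are immediate.
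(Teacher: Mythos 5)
Your argument is correct and is precisely the one the paper has in mind: the paper gives no explicit proof, stating only that the corollary is ``trivially extracted'' from Definition \ref{def:PDorder} and \eqref{def:rhoco}, and your three observations (cancellation of the marginal products by uniformity, pointwise comparison of the joint probabilities via the PD$(\alpha)$ order at $x_i=\alpha_i u_i$, and positivity of the normalising constant $2^n(n+1)/(2^n-(n+1))$ for $n\ge 2$) are exactly the details being suppressed. Nothing further is needed.
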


For instance, if $C^{\lambda_1}_n$ and $C^{\lambda_2}_n$ are two members of the FGM family of $n$-copulas given in Example \ref{ex:FGM}, it is known that $C^{\lambda_1}_n\leq_{{\rm PD}(\alpha)}C^{\lambda_2}_n$ if, and only if, $\lambda_1\leq\lambda_2$ (\cite{Edeamo2024}). Therefore, $\rho_n^\alpha(C^{\lambda_1}_n)\leq\rho_n^\alpha(C^{\lambda_2}_n)$ if, and only if, $\lambda_1\leq\lambda_2$.

In this work, we address a conjecture by Nelsen and Úbeda-Flores (\cite{Ne2011}) which posits that the multidimensional directional $\rho$-coefficient can be expressed as a linear combination of lower-dimensional directional $\rho$-coefficients. Specifically, \cite{Ne2011} suggests this involves a linear combination of $\rho^+$ and $\rho^-$ coefficients. Building on this, and noting the complexity highlighted by \cite{Gar2013} regarding the representation of $2^n$ directional coefficients $\rho_n^{\alpha}$ for $n\geq 4$ in terms of lower-dimensional marginal $\rho$-coefficients, we prove that any directional $\rho$-coefficient can be expressed as a linear combination of $\rho^-$ coefficients of the constituent random variables ---we adopt the convention that the sum (respectively, product) over an empty set is 0 (respectively, 1)--- and correcting \cite[Equation (10)]{Gar2013}, as well.

\begin{theorem} \label{theo: rho}
     Let $\XX$ be an $n$-dimensional random vector whose marginals are uniform on $[0,1]$, let $C$ be its associated $n$-copula and $\alpha\in\mathbb{R}^n$ such that $\alpha_i\in\{-1,1\}$ for all $i=1,\ldots,n$. Let $I\subseteq\{1,2,\ldots,n\}$ such that $\alpha_i=-1$ if $i\in I$ and $\alpha_i=1$ if $i\in J=\{1,2,\ldots,n\}\setminus I$
     . Then
 $$\rho_n^\alpha(C)=\frac{2^n(n+1)}{2^n-(n+1)} \sum_{S \subseteq J
 } (-1)^{|S|} \frac{2^{|I|+|S|}-(|I|+|S|+1)}{2^{|I|+|S|}(|I|+|S|+1)} \rho_{{\bf X}_{I \cup S}}^-$$
     where ${\bf X}_K$ is the random vector whose components are $X_i$ for $i\in K$, and $\rho_{{\bf X}_{K}}^-$ corresponds to the coefficient $\rho_k^-$ for ${\bf X}_K$.
\end{theorem}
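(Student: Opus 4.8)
The plan is to reduce the defining integral of $\rho_n^\alpha(C)$ to the copula integrals $\int_{\I^k} C_K(\mathbf{v})\,d\mathbf{v}$ attached to the sub-vectors $\XX_K$, and then to invert the known copula expression for $\rho_k^-$ recalled just after \eqref{def:rhoco}. First I would split $\int_{\I^n} Q_\alpha\,d\uu$ into the joint-probability term and the product-of-marginals term. Since the margins are uniform, each factor $\int_0^1 \mathbb{P}[\alpha_i X_i > \alpha_i u_i]\,du_i$ equals $1/2$ irrespective of the sign of $\alpha_i$, so the product term contributes exactly $1/2^n$ and
$$\int_{\I^n} Q_\alpha\,d\uu = \int_{\I^n}\mathbb{P}\Big[\bigcap_{i\in I}(X_i < u_i)\cap\bigcap_{j\in J}(X_j > u_j)\Big]\,d\uu \;-\; \frac{1}{2^n}.$$

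Second, I would rewrite the joint probability through indicators, replacing each upper-orthant event for $j\in J$ by $\mathbf{1}(X_j > u_j) = 1 - \mathbf{1}(X_j \le u_j)$ and expanding the resulting product,
$$\prod_{j\in J}\big(1 - \mathbf{1}(X_j \le u_j)\big) = \sum_{S\subseteq J}(-1)^{|S|}\prod_{j\in S}\mathbf{1}(X_j \le u_j).$$
Taking expectations and using that the margins are continuous (so $<$ and $\le$ agree almost surely), the generic term becomes $\mathbb{P}\big[\bigcap_{i\in I\cup S}(X_i \le u_i)\big]$, i.e.\ the marginal copula $C_{I\cup S}$ evaluated at the coordinates indexed by $I\cup S$ with the remaining coordinates set to $1$. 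Integrating over $\I^n$ and carrying out the trivial integrations in the coordinates outside $I\cup S$ (each contributing a factor $1$) collapses everything to $\int_{\I^{|I|+|S|}} C_{I\cup S}(\mathbf{v})\,d\mathbf{v}$, where $|I\cup S| = |I|+|S|$ because $S\subseteq J$ and $I\cap J=\emptyset$.

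Third, writing $k=|I|+|S|$ and inverting the copula formula for $\rho_k^-$ gives
$$\int_{\I^{k}} C_{I\cup S}(\mathbf{v})\,d\mathbf{v} = \frac{1}{2^{k}} + \frac{2^{k}-(k+1)}{2^{k}(k+1)}\,\rho_{\XX_{I\cup S}}^-.$$
Substituting this, the $\rho^-$ contributions reproduce the claimed sum verbatim, and it remains only to check that the accumulated constants cancel, namely $\sum_{S\subseteq J}(-1)^{|S|}2^{-(|I|+|S|)} - 2^{-n} = 0$. Factoring out $2^{-|I|}$ and applying the binomial theorem yields $\sum_{S\subseteq J}(-1/2)^{|S|} = (1/2)^{|J|} = 2^{-(n-|I|)}$, so the sum equals $2^{-|I|}\cdot 2^{-(n-|I|)} = 2^{-n}$ and the $-2^{-n}$ is absorbed. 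Multiplying through by the normalization $2^n(n+1)/(2^n-(n+1))$ then produces the stated identity.

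I expect the main thing to get right to be the bookkeeping of the index sets: correctly identifying $\mathbb{E}\big[\prod_{i\in I\cup S}\mathbf{1}(X_i\le u_i)\big]$ with the marginal copula $C_{I\cup S}$ and collapsing the $n$-dimensional integral down to dimension $|I|+|S|$. The binomial cancellation is the clean structural identity that makes the constant terms disappear, but it is elementary. The degenerate terms with $|I|+|S|\in\{0,1\}$ need no special handling, since their coefficient $\big(2^{k}-(k+1)\big)/\big(2^{k}(k+1)\big)$ vanishes for $k\in\{0,1\}$, which is exactly consistent with the empty-set conventions adopted before the statement and removes any apparent issue with an undefined $\rho_{\XX_{I\cup S}}^-$.
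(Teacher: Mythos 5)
Your proposal is correct and follows essentially the same route as the paper's proof: inclusion--exclusion over subsets $S\subseteq J$ to express the joint orthant probability through the marginal copulas $C(\mathbf{x}_{I\cup S},\mathbf{1})$, inversion of the copula formula for $\rho^-_{\XX_{I\cup S}}$ to extract the coefficient $\bigl(2^{k}-(k+1)\bigr)/\bigl(2^{k}(k+1)\bigr)$ with $k=|I|+|S|$, and the binomial-theorem cancellation of the residual constants against $2^{-n}$. The only difference is presentational: you derive the inclusion--exclusion step directly from the indicator expansion $\mathbf{1}(X_j>u_j)=1-\mathbf{1}(X_j\le u_j)$, whereas the paper invokes an external lemma from the reference \cite{Edeamo}, and your explicit remark that the $k\in\{0,1\}$ coefficients vanish is a welcome clarification the paper leaves to its empty-set convention.
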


\begin{proof} By using \cite[Lemma 3.4 and Remark 3.5]{Edeamo}, we have
     \begin{align*}
        \rho_n^\alpha(C)&=\frac{2^n(n+1)}{2^n-(n+1)}\int_{\mathbb{I}^n}\left(\mathbb{P}\left[\bigcap_{i=1}^n(\alpha_iX_i>\alpha_ix_i)\right]-\prod_{i=1}^n\mathbb{P}[\alpha_iX_i>\alpha_ix_i]\right){\rm d}x_1\cdots {\rm d}x_n\\
        &=\frac{2^n(n+1)}{2^n-(n+1)}\int_{\mathbb{I}^n}\left(\mathbb{P}\left[\bigcap_{i\in I}X_i<x_i,\bigcap_{i\in J}X_i>x_i\right]-\prod_{i\in I}\mathbb{P}[X_i<x_i]\prod_{i\in J}\mathbb{P}[X_i>x_i]\right){\rm d}x_1\cdots {\rm d}x_n\\
        &=\frac{2^n(n+1)}{2^n-(n+1)}\int_{\mathbb{I}^n}\left( C(\xx_I,{\bf 1})-\sum_{j\in J}C(\xx_{I\cup\{j\}},{\bf1})+\sum_{j_1\in J}\sum_{\substack{j_2\in J\\ j_2>j_1}}C(\xx_{I\cup\{j_1,j_2\}},{\bf 1})-\cdots+(-1)^{|J|}C(\xx)\right.\\
        &\left.\hspace{0.8cm}-\prod_{i\in I}x_i\prod_{i\in J}(1-x_i)\right){\rm d}x_1\cdots {\rm d}x_n\\
        &=\frac{2^n(n+1)}{2^n-(n+1)}\int_{\mathbb{I}^n}\left( \sum_{k=0}^{|J|} (-1)^k \sum_{\substack{S \subseteq J \\ |S|=k}} C(\mathbf{x}_{I \cup S}, \mathbf{1}) - \prod_{i\in I}x_i\prod_{i\in J}(1-x_i) \right){\rm d}{\bf x}.\\
    \end{align*}
    Note 
\begin{align*}
\int_{\mathbb{I}^n} C(\mathbf{x}_{I \cup S},{\bf 1}) {\rm d}\mathbf{x} &= \int_{\mathbb{I}^n} C(\mathbf{x}_{I \cup S},{\bf 1}) {\rm d}\mathbf{x} - \frac{1}{2^{|I|+|S|}} + \frac{1}{2^{|I|+|S|}} \\
&= \int_{\mathbb{I}^n} \left(C(\mathbf{x}_{I \cup S},{\bf 1}) - \prod_{i \in I \cup S} x_i \right) {\rm d}\mathbf{x} + \frac{1}{2^{|I|+|S|}} \\
&= \frac{\frac{2^{|I|+|S|}(|I|+|S|+1)}{2^{|I|+|S|}-(|I|+|S|+1)} \int_{\mathbb{I}^n} \left(C(\mathbf{x}_{I \cup S},{\bf 1}) - \prod_{i \in I \cup S} x_i \right) {\rm d}\mathbf{x}}{\frac{2^{|I|+|S|}(|I|+|S|+1)}{2^{|I|+|S|}-(|I|+|S|+1)}} + \frac{1}{2^{|I|+|S|}} \\
&= \frac{2^{|I|+|S|}-(|I|+|S|+1)}{2^{|I|+|S|}(|I|+|S|+1)} \rho_{\mathbf{X}_{I \cup S}}^- + \frac{1}{2^{|I|+|S|}}
\end{align*}
and    
$$\int_{\mathbb{I}^n}\prod_{i\in I}x_i\prod_{i\in J}(1-x_i){\rm d}x_1\cdots {\rm d}x_n=\frac{1}{2^n}.$$
Considering the alternating sign and the number of such subsets $\binom{|J|}{k}$, the expression for $\rho_n^\alpha(C)$ becomes:
\begin{align*}
\rho_n^\alpha(C)=&\frac{2^n(n+1)}{2^n-(n+1)}\left( \sum_{k=0}^{|J|} (-1)^k \sum_{\substack{S \subseteq J \\ |S|=k}} \left[ \frac{2^{|I|+k}-(|I|+k+1)}{2^{|I|+k}(|I|+k+1)}\rho_{\mathbf{X}_{I \cup S}}^- + \frac{1}{2^{|I|+k}} \right] - \frac{1}{2^n} \right) \\
=&\frac{2^n(n+1)}{2^n-(n+1)}\left( \sum_{S \subseteq J} (-1)^{|S|} \frac{2^{|I|+|S|}-(|I|+|S|+1)}{2^{|I|+|S|}(|I|+|S|+1)}\rho_{\mathbf{X}_{I \cup S}}^- + \sum_{k=0}^{|J|} (-1)^k \binom{|J|}{k} \frac{1}{2^{|I|+k}} - \frac{1}{2^n} \right).
\end{align*}
The term that does not involve the $\rho^-$ coefficients is:
\begin{align*}
\sum_{k=0}^{|J|} (-1)^k \binom{|J|}{k} \frac{1}{2^{|I|+k}} - \frac{1}{2^n} &= \frac{1}{2^{|I|}} \sum_{k=0}^{|J|} \binom{|J|}{k} \left(-\frac{1}{2}\right)^k - \frac{1}{2^n} \\
&= \frac{1}{2^{|I|}} \left(1 - \frac{1}{2}\right)^{|J|} - \frac{1}{2^n} \\
&= \frac{1}{2^{|I|}} \left(\frac{1}{2}\right)^{|J|} - \frac{1}{2^{|I|+|J|}} = \frac{1}{2^{|I|+|J|}} - \frac{1}{2^n} = 0.
\end{align*}
Therefore, the desired formula is obtained.
\end{proof}



From Theorem \ref{theo: rho}, it can be verified that the expressions for $n=3$ align with those presented in \cite[Theorem 3.1]{Gar2013}. Furthermore, Table \ref{tab:n4_coefficients} provides the sixteen $\rho$-coefficients for $n=4$.
\begin{table}[htb]
\begin{center}
\begin{longtable}{c p{0.8\textwidth}}
\hline
\textbf{Direction $\alpha$} & \textbf{Coefficient $\rho_4^\alpha(C)$} \\
\hline
\endhead 
\hline
\multicolumn{2}{|r|}{{Continued on next page}} \\
\endfoot 
\hline
\endlastfoot 
$(1,1,1,1)$ & $\frac{20}{33}(\rho^-_{\mathbf{X}_{12}} + \rho^-_{\mathbf{X}_{13}} + \rho^-_{\mathbf{X}_{14}} + \rho^-_{\mathbf{X}_{23}} + \rho^-_{\mathbf{X}_{24}} + \rho^-_{\mathbf{X}_{34}}) - \frac{10}{11}(\rho^-_{\mathbf{X}_{123}} + \rho^-_{\mathbf{X}_{124}} + \rho^-_{\mathbf{X}_{134}} + \rho^-_{\mathbf{X}_{234}}) + \rho^-_{\mathbf{X}_{1234}}$ \\

$(-1,1,1,1)$ & $-\frac{20}{33}(\rho^-_{\mathbf{X}_{12}} + \rho^-_{\mathbf{X}_{13}} + \rho^-_{\mathbf{X}_{14}}) + \frac{10}{11}(\rho^-_{\mathbf{X}_{123}} + \rho^-_{\mathbf{X}_{124}} + \rho^-_{\mathbf{X}_{134}}) - \rho^-_{\mathbf{X}_{1234}}$ \\

$(1,-1,1,1)$ & $-\frac{20}{33}(\rho^-_{\mathbf{X}_{12}} + \rho^-_{\mathbf{X}_{23}} + \rho^-_{\mathbf{X}_{24}}) + \frac{10}{11}(\rho^-_{\mathbf{X}_{123}} + \rho^-_{\mathbf{X}_{124}} + \rho^-_{\mathbf{X}_{234}}) - \rho^-_{\mathbf{X}_{1234}}$ \\

$(1,1,-1,1)$ & $-\frac{20}{33}(\rho^-_{\mathbf{X}_{13}} + \rho^-_{\mathbf{X}_{23}} + \rho^-_{\mathbf{X}_{34}}) + \frac{10}{11}(\rho^-_{\mathbf{X}_{123}} + \rho^-_{\mathbf{X}_{134}} + \rho^-_{\mathbf{X}_{234}}) - \rho^-_{\mathbf{X}_{1234}}$ \\

$(1,1,1,-1)$ & $-\frac{20}{33}(\rho^-_{\mathbf{X}_{14}} + \rho^-_{\mathbf{X}_{24}} + \rho^-_{\mathbf{X}_{34}}) + \frac{10}{11}(\rho^-_{\mathbf{X}_{124}} + \rho^-_{\mathbf{X}_{134}} + \rho^-_{\mathbf{X}_{234}}) - \rho^-_{\mathbf{X}_{1234}}$ \\

$(-1,-1,1,1)$ & $\frac{20}{33}\rho^-_{\mathbf{X}_{12}} - \frac{10}{11}(\rho^-_{\mathbf{X}_{123}} + \rho^-_{\mathbf{X}_{124}}) + \rho^-_{\mathbf{X}_{1234}}$ \\

$(-1,1,-1,1)$ & $\frac{20}{33}\rho^-_{\mathbf{X}_{13}} - \frac{10}{11}(\rho^-_{\mathbf{X}_{123}} + \rho^-_{\mathbf{X}_{134}}) + \rho^-_{\mathbf{X}_{1234}}$ \\

$(-1,1,1,-1)$ & $\frac{20}{33}\rho^-_{\mathbf{X}_{14}} - \frac{10}{11}(\rho^-_{\mathbf{X}_{124}} + \rho^-_{\mathbf{X}_{134}}) + \rho^-_{\mathbf{X}_{1234}}$ \\

$(1,-1,-1,1)$ & $\frac{20}{33}\rho^-_{\mathbf{X}_{23}} - \frac{10}{11}(\rho^-_{\mathbf{X}_{123}} + \rho^-_{\mathbf{X}_{234}}) + \rho^-_{\mathbf{X}_{1234}}$ \\

$(1,-1,1,-1)$ & $\frac{20}{33}\rho^-_{\mathbf{X}_{24}} - \frac{10}{11}(\rho^-_{\mathbf{X}_{124}} + \rho^-_{\mathbf{X}_{234}}) + \rho^-_{\mathbf{X}_{1234}}$ \\

$(1,1,-1,-1)$ & $\frac{20}{33}\rho^-_{\mathbf{X}_{34}} - \frac{10}{11}(\rho^-_{\mathbf{X}_{134}} + \rho^-_{\mathbf{X}_{234}}) + \rho^-_{\mathbf{X}_{1234}}$ \\

$(-1,-1,-1,1)$ & $\frac{10}{11}\rho^-_{\mathbf{X}_{123}} - \rho^-_{\mathbf{X}_{1234}}$ \\

$(-1,-1,1,-1)$ & $\frac{10}{11}\rho^-_{\mathbf{X}_{124}} - \rho^-_{\mathbf{X}_{1234}}$ \\

$(-1,1,-1,-1)$ & $\frac{10}{11}\rho^-_{\mathbf{X}_{134}} - \rho^-_{\mathbf{X}_{1234}}$ \\

$(1,-1,-1,-1)$ & $\frac{10}{11}\rho^-_{\mathbf{X}_{234}} - \rho^-_{\mathbf{X}_{1234}}$ \\

$(-1,-1,-1,-1)$ & $\rho^-_{\mathbf{X}_{1234}}$ \\
\hline
\end{longtable}
\caption{The sixteen directional $\rho$-coefficients for $n=4$.} \label{tab:n4_coefficients}
\end{center}
\end{table}

\section{Nonparametric estimators for directional dependence}\label{sec:esti}

There are several possible approaches to defining an estimator for the directional $\rho$-coefficients since, in practice, the copula $C$ remains unknown and must be estimated from the available data. Perhaps the most immediate way to define it could be using the empirical copula; however, in \cite{Pe2016}, Pérez and Prieto-Alariz already warn of the lack of good properties of the estimators resulting from this method; for example, these may fall outside the parametric space (\cite[Example 1]{Pe2016}). That is why we have decided that the estimators, $\widehat{\rho}_n^\alpha$, proposed in this paper are based on ranks.

Inspired by \cite{Gar2013}, where a nonparametric estimator for tridimensional directional $\rho$-coefficients is defined, we introduce a nonparametric estimator for multivariate directional $\rho$-coefficients as follows:

Consider a $d$-dimensional random sample of size $n$, $\{X_{1j},\ldots,X_{dj}\}_{j=1}^n$, of a $d$-dimensional random vector $(X_1,\ldots,X_d)$ with associated $d$-copula $C$. Let $R_{ij}$ be the rank of $X_{ij}$ in $\{X_{i1},\ldots,X_{in}\}$, $\overline{R}_{ij}=n+1-R_{ij}$ and let $R_{ij}^{\alpha_i}$ be $R_{ij}$ if $\alpha_i=-1$ and $\overline{R}_{ij}=n+1-R_{ij}$ if $\alpha_i=1$ , for $i=1,\ldots,d$. Then we define
$$\widehat{\rho}_d^{\alpha}=\frac{\frac{1}{n}\sum_{j=1}^n\prod_{i=1}^dR_{ij}^{\alpha_i}-\left(\frac{n+1}{2}\right)^d}{\frac{1}{n}\sum_{j=1}^nj^d-\left(\frac{n+1}{2}\right)^d}.$$

Note that in the case of perfect positive dependence, that is, in the case in which one random variable increases when the others also increase ---i.e., the ranks in each dimension coincide---, we have $\widehat{\rho}_d^+=\widehat{\rho}_d^-=1$. Every other directional estimator is equal to $1$ provided that the ranks $R_{ij}^{\alpha_i}$ are equal for any $i=1,\ldots,d$ and for every $j=1,\ldots,n$. It is also worth observing that the formula for $d=3$ matches the one presented in \cite {Gar2013}:
$$\widehat{\rho}_3^{(\alpha_1,\alpha_2,\alpha_3)}=\frac{8}{n(n-1)(n+1)^2}\sum_{j=1}^nR_{1j}^{\alpha_1}R_{2j}^{\alpha_2}R_{3j}^{\alpha_3}-\frac{n+1}{n-1}.$$

For this dimension, $d=3$, we can define one more coefficient, the arithmetic mean of the three two-dimensional coefficients, $\rho_3^*=(\rho_{12}+\rho_{13}+\rho_{23})/3$. To estimate this coefficient we can calculate the arithmetic mean of the estimators of the two-dimensional coefficients as in \cite{Gar2013}, i.e., $\widehat{\rho}_3^*=(\widehat{\rho}_{12}+\widehat{\rho}_{13}+\widehat{\rho}_{23})/3$. However, as with the coefficient, this estimator may not capture dependence relationships between the samples of the random variables, as we illustrate in the following example.

\begin{example}
     For the FGM family of 3-copulas with $\lambda=0.6$, we performed a Monte Carlo simulation to generate 1000 samples, each of size 500. For each of these samples, both the $\rho_3^*$ coefficient and the directional $\rho$-coefficients $\rho_3^-$ and $\rho_3^+$ have been calculated. The arithmetic mean of all the results obtained in the samples yielded the following values
     $$\widehat{\rho}_3^*=0.0006\hspace{1cm}\widehat{\rho}_3^-=-0.0215\hspace{1cm}\widehat{\rho}_3^+=0.0217.$$
     So the value of $\widehat{\rho}_3^*$ tends to $0$ as $n$ increases, therefore, there may be dependence among the samples of the random variables undetected by the estimator $\widehat{\rho}_3^*$.
\end{example}

It is possible to express these estimators as a linear combination of the estimators of the lower-dimensional $\widehat{\rho}^-$-coefficients, analogously to the combination presented in Theorem \ref{theo: rho}. In order to prove this result, we first establish the following technical lemma.

\begin{lemma}\label{lem:aux}
\label{lem:generalized_product_decomposition}
Let $U_1, \ldots, U_d$ be random variables with marginal distributions uniform on $[0,1]$. For any partition of the set of indices $\{1, \ldots, d\}$ into two disjoint sets $I$ and $J$ (i.e., $I \cup J = \{1, \ldots, d\}$ and $I \cap J = \emptyset$), the following identity holds for the expectation of a mixed product:
$$ \mathbb{E}\left[ \prod_{i \in I} U_i \prod_{i \in J} (1-U_i) \right] = \sum_{S \subseteq J} (-1)^{|S|} \mathbb{E}\left[ \prod_{l \in I \cup S} (1 - U_l) \right]. $$
This identity also applies to empirical averages for a sample $U_{1j}, \ldots, U_{dj}$ for $j=1, \ldots, n$:
$$ \frac{1}{n}\sum_{j=1}^n \left(\prod_{i \in I} U_{ij}\right) \left(\prod_{i \in J} (1-U_{ij})\right) = \sum_{S \subseteq J} (-1)^{|S|} \left( \frac{1}{n} \sum_{j=1}^n \prod_{l \in I \cup S} (1 - U_{lj}) \right). $$
\end{lemma}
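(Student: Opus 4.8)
The plan is to obtain both displayed identities from a single finite algebraic expansion and then read each one off by applying a linear functional that acts term by term on the expansion: the expectation $\mathbb{E}[\cdot]$ for the first identity, and the empirical mean $\frac{1}{n}\sum_{j=1}^{n}(\cdot)$ for the second. In this way the combinatorial core carries no probabilistic content at all; the only place where the distributional and sampling structure enters is the final matching step. So I would first prove a deterministic statement about real numbers $u_1,\ldots,u_d\in[0,1]$, and only afterwards push it through $\mathbb{E}[\cdot]$ and through the average over the sample $U_{1j},\ldots,U_{dj}$.

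The engine is the distributive law applied to the right-hand side. Factoring out the $I$-block and collecting the alternating sum over subsets of $J$ gives, for fixed values $u_1,\ldots,u_d$,
$$\sum_{S\subseteq J}(-1)^{|S|}\prod_{l\in I\cup S}(1-u_l)=\prod_{i\in I}(1-u_i)\sum_{S\subseteq J}\prod_{j\in S}\bigl(-(1-u_j)\bigr)=\prod_{i\in I}(1-u_i)\prod_{j\in J}\bigl(1-(1-u_j)\bigr)=\prod_{i\in I}(1-u_i)\prod_{j\in J}u_j,$$
where I have used the elementary identity $\sum_{S\subseteq J}\prod_{j\in S}a_j=\prod_{j\in J}(1+a_j)$ with $a_j=-(1-u_j)$, so that $1+a_j=u_j$. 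Thus the alternating sum telescopes to the single mixed product $\prod_{i\in I}(1-u_i)\prod_{j\in J}u_j$. Applying $\mathbb{E}[\cdot]$ (respectively, setting $u_l=U_{lj}$ and averaging over $j$) shows that the right-hand side of each display equals $\mathbb{E}\!\left[\prod_{i\in I}(1-U_i)\prod_{j\in J}U_j\right]$, respectively its empirical counterpart $\frac{1}{n}\sum_{j=1}^{n}\prod_{i\in I}(1-U_{ij})\prod_{j'\in J}U_{j'j}$. This reduction is purely formal and uses only the binomial/distributive identity, so it is robust and introduces no hidden hypotheses.

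What then remains --- and this is the crux --- is to identify the reduced functional of $\prod_{i\in I}(1-U_i)\prod_{j\in J}U_j$ with the same functional of the left-hand mixed product $\prod_{i\in I}U_i\prod_{j\in J}(1-U_j)$. These two products are interchanged by the componentwise reflection $U_i\mapsto 1-U_i$, so it suffices to verify that the functional in question is invariant under that reflection. For the expectation I would draw this from the fact that each marginal is uniform on $[0,1]$ and hence symmetric about $1/2$; for the empirical version the analogous input is that each rank vector $(R_{i1},\ldots,R_{in})$ is a permutation of $\{1,\ldots,n\}$, so it is unchanged as a multiset under $R\mapsto n+1-R$. I expect the main obstacle to be organizing this reflection-invariance uniformly across all the mixed-moment blocks $\mathbb{E}\!\left[\prod_{l\in K}(1-U_l)\right]$ that the telescoping produces, tracking it through the joint distribution rather than the one-dimensional margins alone; once that bookkeeping is combined with the distributive collapse of the previous step, the stated identity --- in both its expectation and its empirical form --- follows.
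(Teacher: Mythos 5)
Your opening step is correct, and it is in fact the same binomial engine the paper itself uses; but the concluding ``reflection-invariance'' step, which you rightly flag as the crux, is a genuine gap --- and it cannot be closed, because the property you need is false under the stated hypotheses. After your collapse of the right-hand side, what remains is
$$\mathbb{E}\Bigl[\prod_{i\in I}U_i\prod_{j\in J}(1-U_j)\Bigr]=\mathbb{E}\Bigl[\prod_{i\in I}(1-U_i)\prod_{j\in J}U_j\Bigr],$$
and you propose to derive it from symmetry of each uniform margin about $1/2$. Marginal symmetry, however, does not give invariance of joint mixed moments under the componentwise reflection ${\bf U}\mapsto{\bf 1}-{\bf U}$; that would require radial symmetry of the joint law ($C=\widehat{C}$), an extra assumption the lemma does not make. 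Concretely, take $d=3$, $I=\{1\}$, $J=\{2,3\}$, $U_1=U_2=V$ uniform on $[0,1]$, and $U_3=f(V)$ with $f$ the tent map ($f(v)=2v$ on $[0,1/2]$, $f(v)=2-2v$ on $[1/2,1]$), which preserves Lebesgue measure, so all three margins are uniform. Then $\mathbb{E}[U_1(1-U_2)(1-U_3)]=\frac16-\frac{5}{48}=\frac1{16}$, while $\mathbb{E}[(1-U_1)U_2U_3]=\frac{5}{48}$. The analogous rank construction ($R_{1j}=R_{2j}=j$, $R_{3j}$ a tent-like permutation) defeats the empirical version too: your multiset argument controls each margin separately, but the functional couples coordinates within the same observation $j$, so invariance of each rank column under $R\mapsto n+1-R$ is not enough. (Your reduction does go through when only first moments survive, e.g.\ $d=2$, which is why it looks plausible.)

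What your faithful reduction actually exposes is that the lemma as printed is misstated. The paper's proof runs the same binomial identity in the opposite direction, expanding $\prod_{i\in I}U_i=\prod_{i\in I}(1-(1-U_i))$ to obtain the pointwise identity
$$\prod_{i\in I}U_i\prod_{i\in J}(1-U_i)=\sum_{S'\subseteq I}(-1)^{|S'|}\prod_{l\in S'\cup J}(1-U_l),$$
which requires no distributional assumptions whatsoever, and then asserts this coincides with the stated right-hand side ``by relabeling $S'$ as $S$''. That relabeling is invalid: it silently swaps the roles of $I$ and $J$, and your collapse together with the counterexample above shows the stated form (sum over $S\subseteq J$, blocks $I\cup S$) is not equivalent to it. The correct statement must either keep the sum over subsets of $I$ with blocks $S'\cup J$, or keep the stated right-hand side and change the left-hand side to $\prod_{i\in I}(1-U_i)\prod_{i\in J}U_i$. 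Once the statement is repaired, your distributive collapse \emph{is} the entire proof --- expectation and empirical average enter only through linearity, exactly as you organized it --- and no reflection step is needed. So the right move is not to prove reflection invariance but to correct the statement, and to note that the mismatch propagates into how the lemma is invoked in the estimator theorem.
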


\begin{proof}
Consider the product term $\prod_{i \in I} U_i \prod_{i \in J} (1-U_i)$. We can rewrite each $U_i$ for $i \in I$ as $1 - (1-U_i)$:
$$ \prod_{i \in I} U_i = \prod_{i \in I} (1 - (1-U_i)).$$
Applying the inclusion-exclusion principle for products (i.e., expanding the product of terms $(1-x_i)$ as $\sum_{S' \subseteq I} (-1)^{|S'|} \prod_{s' \in S'} x_{s'}$ with $x_i = (1-U_i)$), we get:
$$ \prod_{i \in I} (1 - (1-U_i)) = \sum_{S' \subseteq I} (-1)^{|S'|} \prod_{s' \in S'} (1-U_{s'}).$$
Substitute this back into the original product:
$$ \left( \sum_{S' \subseteq I} (-1)^{|S'|} \prod_{s' \in S'} (1-U_{s'}) \right) \left( \prod_{i \in J} (1-U_i) \right).$$
Because the distributive law, and since $I$ and $J$ are disjoint, and $S' \subseteq I$, the sets $S'$ and $J$ are also disjoint, the product of terms $(1-U_l)$ can be combined over the union of their index sets $S' \cup J$:
$$ \sum_{S' \subseteq I} (-1)^{|S'|} \left( \prod_{s' \in S'} (1-U_{s'}) \prod_{i \in J} (1-U_i) \right) = \sum_{S' \subseteq I} (-1)^{|S'|} \prod_{l \in S' \cup J} (1-U_l).$$
Now, let us take the expectation (or empirical average) of both sides.
$$ \mathbb{E}\left[ \prod_{i \in I} U_i \prod_{i \in J} (1-U_i) \right] = \mathbb{E}\left[ \sum_{S' \subseteq I} (-1)^{|S'|} \prod_{l \in S' \cup J} (1-U_l) \right];$$
By the linearity of expectation:
$$ \mathbb{E}\left[ \prod_{i \in I} U_i \prod_{i \in J} (1-U_i) \right] = \sum_{S' \subseteq I} (-1)^{|S'|} \mathbb{E}\left[ \prod_{l \in S' \cup J} (1-U_l) \right].$$
This form is equivalent to the identity in the lemma statement by simply relabeling $S'$ as $S$. The extension to empirical averages follows directly by replacing expectations with sample means.
\end{proof}

We are now ready to prove that the directional coefficients can be represented as a linear combination of $\widehat{\rho}^-$ lower-dimensional coefficients.

\begin{theorem}{\label{th:est}}
    Let $\{X_{1j},\ldots,X_{d_j}\}_{j=1}^n$ be a $d$-dimensional random sample of size $n$ of a $d$-dimensional random vector $(X_1,\ldots,X_d)$ with associated $d$-copula $C$. Let $\alpha\in\mathbb{R}^n$ be such that $\alpha_i\in\{-1,1\}$ for all $i=1,\ldots,n$. Let $I\subseteq\{1,2,\ldots,n\}$ such that $\alpha_i=-1$ if $i\in I$ and $\alpha_i=1$ if $i\in J=\{1,2,\ldots,n\}\setminus I$
    . Then
\begin{align}\label{eq:estimator}
\widehat{\rho}_d^{\alpha} = \frac{(n+1)^d}{\frac{1}{n}\sum_{j=1}^nj^d-\left(\frac{n+1}{2}\right)^d} \sum_{k=0}^{|J|} (-1)^k \frac{2^{|I|+k}-(|I|+k+1)}{2^{|I|+k}(|I|+k+1)} \sum_{\substack{S \subseteq J \\ |S|=k}} \widehat{\rho}_{X_{I\cup S}}^-
\end{align}
\end{theorem}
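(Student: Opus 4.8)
The plan is to imitate the proof of Theorem~\ref{theo: rho}, but working entirely at the level of ranks. I would start from the numerator of $\widehat{\rho}_d^\alpha$, namely $\frac1n\sum_{j=1}^n\prod_{i=1}^d R_{ij}^{\alpha_i}-\left(\frac{n+1}{2}\right)^d$, and split the product according to the index sets: for $i\in I$ the factor is $R_{ij}$, while for $i\in J$ it is $\overline R_{ij}=(n+1)-R_{ij}$. Expanding each of the $|J|$ factors $\overline R_{ij}=(n+1)-R_{ij}$ by the binomial theorem --- the rank-level counterpart of the inclusion--exclusion identity of Lemma~\ref{lem:aux}, applied to the rescaled ranks $R_{ij}/(n+1)$ --- turns the mixed product into a signed sum of pure rank products,
\[
\frac1n\sum_{j=1}^n\prod_{i\in I}R_{ij}\prod_{i\in J}\overline R_{ij}=\sum_{S\subseteq J}(-1)^{|S|}(n+1)^{|J|-|S|}\,\frac1n\sum_{j=1}^n\prod_{i\in I\cup S}R_{ij},
\]
in which every surviving product ranges over a set $I\cup S$ with $S\subseteq J$, exactly the index sets appearing on the right-hand side of \eqref{eq:estimator}.

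Next I would insert the definition of the lower-dimensional estimator in the form $\frac1n\sum_{j=1}^n\prod_{i\in I\cup S}R_{ij}=D_{m}\,\widehat{\rho}^-_{X_{I\cup S}}+\left(\frac{n+1}{2}\right)^{m}$, where $m=|I|+|S|$ and $D_m:=\frac1n\sum_{j=1}^n j^{m}-\left(\frac{n+1}{2}\right)^{m}$ is the normalizing denominator in dimension $m$. Substituting and separating the terms that carry a $\widehat{\rho}^-$ factor from those that do not, the constant part collapses to $(n+1)^d\sum_{S\subseteq J}(-1)^{|S|}2^{-(|I|+|S|)}-\left(\frac{n+1}{2}\right)^d$, which vanishes identically by the binomial identity $\sum_{k=0}^{|J|}\binom{|J|}{k}\left(-\frac12\right)^k=2^{-|J|}$; this is precisely the cancellation already used in the proof of Theorem~\ref{theo: rho}. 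Dividing the surviving part by $D_d$ then yields an exact identity of the shape claimed in \eqref{eq:estimator}, with each $\widehat{\rho}^-_{X_{I\cup S}}$ carrying the alternating sign $(-1)^{|S|}$ and the coefficient $(n+1)^{|J|-|S|}D_{|I|+|S|}/D_d$.

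The main obstacle is the closed form of these coefficients. The derivation above produces, for $|S|=k$, the exact finite-sample coefficient $\frac{(n+1)^d}{D_d}\cdot\frac{D_{|I|+k}}{(n+1)^{|I|+k}}=\frac{(n+1)^d}{D_d}\left(\frac1n\sum_{j=1}^n\left(\frac{j}{n+1}\right)^{|I|+k}-\frac{1}{2^{|I|+k}}\right)$, whereas \eqref{eq:estimator} displays $\frac{(n+1)^d}{D_d}\cdot\frac{2^{|I|+k}-(|I|+k+1)}{2^{|I|+k}(|I|+k+1)}$. These two agree only after replacing the empirical power sum $\frac1n\sum_{j=1}^n(j/(n+1))^{m}$ by its integral limit $\int_0^1 x^{m}\,{\rm d}x=\frac1{m+1}$, and this substitution is exact only as $n\to\infty$ --- for instance at $m=2$ the empirical value is $\frac{n-1}{12(n+1)}$ against $\frac1{12}$. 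Hence the clean formula \eqref{eq:estimator} is the large-sample form of an exact identity whose finite-$n$ coefficient is $D_{|I|+k}/(n+1)^{|I|+k}$. I would therefore expect the delicate step to be this identification: either one passes to the limit $n\to\infty$, or, to keep the statement exact for every $n$, one restates the coefficient as $\frac{D_{|I|+k}}{(n+1)^{|I|+k}}$ (equivalently $(n+1)^{|J|-k}D_{|I|+k}/D_d$ after the global factor). Everything else --- the inclusion--exclusion expansion and the cancellation of the constant term --- is routine and mirrors Theorem~\ref{theo: rho}.
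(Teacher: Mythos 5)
Your strategy is exactly the paper's: expand the mixed product of directional ranks by inclusion--exclusion over the factors indexed by $J$, recognize each surviving pure product as the numerator of a lower-dimensional estimator, and kill the constant terms with $\sum_{k}\binom{|J|}{k}(-1/2)^k=2^{-|J|}$. The paper performs the same computation after rescaling to pseudo-observations $U_{ij}=R_{ij}/(n+1)$ and packages the expansion as Lemma~\ref{lem:aux}.

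The coefficient discrepancy you isolate at the end is real, and your diagnosis of it is correct; the only thing you are missing is how the paper sidesteps it. Inside its proof the paper does \emph{not} take $\widehat{\rho}^-_{X_{I\cup S}}$ to be the rank estimator normalized by the finite-sample denominator $D_m=\frac1n\sum_{j}j^{m}-\left(\frac{n+1}{2}\right)^{m}$; it \emph{defines} the lower-dimensional pieces with the asymptotic constant,
\[
\widehat{\rho}_K^-=\frac{2^{|K|}(|K|+1)}{2^{|K|}-(|K|+1)}\left(\frac1n\sum_{j=1}^n\prod_{i\in K}(1-U_{ij})-2^{-|K|}\right),
\]
so that the bracketed quantity equals $\frac{2^{|K|}-(|K|+1)}{2^{|K|}(|K|+1)}\widehat{\rho}_K^-$ by construction and the displayed coefficients in \eqref{eq:estimator} are exact for every $n$, with no passage to the limit. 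Your alternative --- exact coefficients $D_{|I|+k}/(n+1)^{|I|+k}$ when each sub-estimator carries its own finite-sample denominator, agreeing with \eqref{eq:estimator} only as $n\to\infty$ --- is the other internally consistent reading, and it is arguably the one suggested by the paper's general definition of $\widehat{\rho}_d^\alpha$ in Section~\ref{sec:esti}; the theorem statement itself does not disambiguate. Two further small points of divergence worth noting: (i) you build the sub-estimators from raw-rank products $\prod_{i\in I\cup S}R_{ij}$, whereas the paper's proof builds them from $\prod_{i\in I\cup S}(1-U_{ij})=\prod_{i\in I\cup S}\overline R_{ij}/(n+1)$, and for three or more indices these are genuinely different sample quantities; (ii) your expansion of the $J$-factors is the one that directly produces index sets of the form $I\cup S$ with $S\subseteq J$ as in the statement, while Lemma~\ref{lem:aux} as actually proved expands the complementary ($I$-indexed) factors and obtains sets $S'\cup J$ with $S'\subseteq I$ before a relabeling. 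So your argument is sound and essentially the intended one; to recover the theorem verbatim you need only adopt the paper's in-proof normalization and building blocks for $\widehat{\rho}^-_{X_{I\cup S}}$.
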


\begin{proof}Let $\mathbf{X}_1, \ldots, \mathbf{X}_n$ be a random sample from a $d$-variate distribution, and let $R_{ij}^{\alpha_i}$ the (directional) ranks. The pseudo-observations are defined as $U_{ij} = R_{ij}/(n+1)$ for $i=1,\ldots,d$ and $j=1,\ldots,n$. For a subset of indices $K \subseteq \{1,\ldots,d\}$ with $|K|$ components, the empirical upper-tail dependence coefficient $\widehat{\rho}_K^-$ is defined as:
$$\widehat{\rho}_K^- = \frac{2^{|K|}(|K|+1)}{2^{|K|}-(|K|+1)} \left( \frac{1}{n} \sum_{j=1}^n \prod_{i \in K} (1 - U_{ij}) - \left(\frac{1}{2}\right)^{|K|} \right).$$
Now, let $N_{\text{rank}}$ denote the numerator of $\widehat{\rho}_d^{\alpha}$ and $D_{\text{rank}}$ denote its denominator, i.e.,
$$N_{\text{rank}} = \frac{1}{n}\sum_{j=1}^n\prod_{i=1}^dR_{ij}^{\alpha_i}-\left(\frac{n+1}{2}\right)^d$$
$$D_{\text{rank}} = \frac{1}{n}\sum_{j=1}^nj^d-\left(\frac{n+1}{2}\right)^d$$
By definition, $R_{ij}^{\alpha_i} = (n+1)U_{ij}$ if $\alpha_i = -1$ (i.e., $i \in I$), and $R_{ij}^{\alpha_i} = (n+1)(1-U_{ij})$ if $\alpha_i = 1$ (i.e., $i \in J$). Thus, the product $\prod_{i=1}^d R_{ij}^{\alpha_i}$ becomes:
\begin{align*} \prod_{i=1}^d R_{ij}^{\alpha_i} &= \left(\prod_{i \in I} (n+1)U_{ij}\right) \left(\prod_{i \in J} (n+1)(1-U_{ij})\right) \\ &= (n+1)^{|I|} (n+1)^{|J|} \left(\prod_{i \in I} U_{ij}\right) \left(\prod_{i \in J} (1-U_{ij})\right) \\ &= (n+1)^d \left(\prod_{i \in I} U_{ij}\right) \left(\prod_{i \in J} (1-U_{ij})\right).
\end{align*}
Substituting this expression into the expression for $N_{\text{rank}}$:
\begin{align*} N_{\text{rank}} &= \frac{1}{n}\sum_{j=1}^n (n+1)^d \left(\prod_{i \in I} U_{ij}\right) \left(\prod_{i \in J} (1-U_{ij})\right) - \left(\frac{n+1}{2}\right)^d \\ &= (n+1)^d \left[ \frac{1}{n}\sum_{j=1}^n \left(\prod_{i \in I} U_{ij}\right) \left(\prod_{i \in J} (1-U_{ij})\right) - \left(\frac{1}{2}\right)^d \right]. \end{align*}
Let $K_{\alpha}^{\text{emp}}$ denote the term in the square brackets:
$$K_{\alpha}^{\text{emp}} = \frac{1}{n}\sum_{j=1}^n \left(\prod_{i \in I} U_{ij}\right) \left(\prod_{i \in J} (1-U_{ij})\right) - \left(\frac{1}{2}\right)^d;$$
so, $N_{\text{rank}} = (n+1)^d K_{\alpha}^{\text{emp}}$. Lemma \ref{lem:aux} states that $K_{\alpha}^{\text{emp}}$ can be expressed as:
$$K_{\alpha}^{\text{emp}} = \sum_{S \subseteq J} (-1)^{|S|} \left( \frac{1}{n} \sum_{j=1}^n \prod_{l \in I \cup S} (1 - U_{lj}) - \left(\frac{1}{2}\right)^{|I|+|S|} \right).$$
This identity allows us to decompose $K_{\alpha}^{\text{emp}}$ into a sum of terms related to products of $(1-U_{lj})$.
From the definition of $\widehat{\rho}_K^-$, we can isolate the parenthesized term:
$$ \frac{1}{n} \sum_{j=1}^n \prod_{i \in K} (1 - U_{ij}) - \left(\frac{1}{2}\right)^{|K|} = \frac{2^{|K|}-(|K|+1)}{2^{|K|}(|K|+1)} \widehat{\rho}_K^-. $$
Substituting this, and by setting $K = I \cup S$, we have:
$$K_{\alpha}^{\text{emp}} = \sum_{S \subseteq J} (-1)^{|S|} \left( \frac{2^{|I|+|S|}-(|I|+|S|+1)}{2^{|I|+|S|}(|I|+|S|+1)} \widehat{\rho}_{X_{I\cup S}}^- \right).$$
This sum can be regrouped by the size of $S$, $k=|S|$:
$$K_{\alpha}^{\text{emp}} = \sum_{k=0}^{|J|} (-1)^k \frac{2^{|I|+k}-(|I|+k+1)}{2^{|I|+k}(|I|+k+1)} \sum_{\substack{S \subseteq J \\ |S|=k}} \widehat{\rho}_{X_{I\cup S}}^-.$$
Finally, recall that $\widehat{\rho}_d^{\alpha} = \frac{N_{\text{rank}}}{D_{\text{rank}}}$ and substitute the expression for $N_{\text{rank}} = (n+1)^d K_{\alpha}^{\text{emp}}$, whence Equation \eqref{eq:estimator} follows, completing the proof.
\end{proof}

\begin{remark}Observe that, since
\begin{align*}
\lim_{n \to \infty} \frac{(n+1)^d}{\frac{1}{n}\sum_{l=1}^nl^d-\left(\frac{n+1}{2}\right)^d}=& \lim_{n \to \infty} \frac{n^d\left(1+\frac{1}{n}\right)^d}{\left(\frac{n^d}{d+1} + O(n^{d-1})\right)-\left(\frac{n^d}{2^d} + O(n^{d-1})\right)}\\
=&\lim_{n \to \infty} \frac{n^d}{\left(\frac{1}{d+1} - \frac{1}{2^d}\right)n^d + O(n^{d-1})}\\
=& \lim_{n \to \infty} \frac{1}{\left(\frac{1}{d+1} - \frac{1}{2^d}\right) + O\left(\frac{1}{n}\right)}\\
=&\frac{1}{\frac{1}{d+1} - \frac{1}{2^d}}\\
=& \frac{2^d(d+1)}{2^d-(d+1)},
\end{align*}
from Theorem \ref{th:est} we have
$$\lim_{n \to \infty} \widehat{\rho}_d^{\alpha} = \frac{2^d(d+1)}{2^d-(d+1)} \sum_{k=0}^{|J|} (-1)^k \frac{2^{|I|+k}-(|I|+k+1)}{2^{|I|+k}(|I|+k+1)} \sum_{\substack{S \subseteq J \\ |S|=k}} \rho_{X_{I\cup S}}^- = \rho_d^\alpha(C).$$
\end{remark}

\begin{remark}
Note that, for instance, the estimators for dimensions 3 and 4 are given by
$$\widehat{\rho}_3^{\alpha} = \frac{8(n+1)}{n-1} \left( \sum_{\substack{S \subseteq J \\ |I|+|S|=2}} \frac{1}{12} \widehat{\rho}_{X_{I\cup S}}^- - \sum_{\substack{S \subseteq J \\ |I|+|S|=3}} \frac{1}{8} \widehat{\rho}_{X_{I\cup S}}^- \right)$$
and
$$\widehat{\rho}_4^{\alpha} = \frac{240(n+1)^3}{33n^3+27n^2-37n-23} \left( \sum_{\substack{S \subseteq J \\ |I|+|S|=2}} \frac{1}{12} \widehat{\rho}_{X_{I\cup S}}^- - \sum_{\substack{S \subseteq J \\ |I|+|S|=3}} \frac{1}{8} \widehat{\rho}_{X_{I\cup S}}^- + \sum_{\substack{S \subseteq J \\ |I|+|S|=4}} \frac{11}{80} \widehat{\rho}_{X_{I\cup S}}^- \right),$$
respectively.
\end{remark}

\subsection{Empirical processes and some properties of these estimators}

\subsubsection{Empirical process}

In \cite{Gar2013}, an empirical process was introduced in order to estimate a generalization of a $3$-copula which can be easily extended to the $n$-dimensional case. As the authors did, we commence considering $\upbeta$ an index set, such that $\upbeta\subseteq\{1,\ldots,d\}$. We define $x_\upbeta=(x_{\upbeta_1},\ldots,x_{\upbeta_k})$ an arbitrary value of $\XX_\upbeta$. Let $|\upbeta|$ denote the cardinal of $\upbeta$ and $\alpha\in\mathbb{R}^d$ a direction. Consider the function 
    \begin{equation}\label{H}
     H_{\upbeta,\alpha}(x_\upbeta)=\mathbb{P}[\alpha_iX_i\leq\alpha_ix_i],\hspace{0.5cm}i\in\upbeta,  
    \end{equation}
i.e.,$$
H_{\upbeta,\alpha}(x_\upbeta) = \mathbb{P}\left[ \bigcap_{\substack{i \in \upbeta \\ \alpha_i=1}} \{X_i \leq x_i\}, \bigcap_{\substack{i \in \upbeta \\ \alpha_i=-1}} \{X_i \geq x_i\} \right].
$$

Let $u_\upbeta$ be the analogous of $x_\upbeta$ and $F_i$ the marginal cumulative distribution function of $X_i$. Then 
\begin{equation}\label{C}
C_{\upbeta,\alpha}(u_\upbeta) = H_{\upbeta,\alpha}((v_i)_{i \in \upbeta}),
\end{equation}
where
$$
v_k =
\begin{cases}
F_k^{-1}(u_i), & \text{if } \alpha_i = 1, \\
F_k^{-1}(1-u_i), & \text{if } \alpha_i = -1,
\end{cases}$$
is a generalization of the $d$-copula when $\alpha_i=1$ and $\upbeta=\{1,\ldots,d\}$. From this expression, we can introduce as in \cite{Gar2013} the empirical process to estimate it
\begin{equation}\label{estC}
C_{\upbeta,\alpha,n}(u_\upbeta)=\frac{1}{n+1}\sum_{j=1}^n\prod_{i\in\upbeta}{\bf 1}_{\left\{\alpha_i\frac{R_{ij}}{n+1}\leq\alpha_iu_i\right\}}.    
\end{equation}
When $\upbeta=\{1,\ldots,d\}$ and $\alpha={\bf 1}$, Expression (\ref{estC}) is the empirical estimator of the $d$-copula.

To determine the weak convergence ---denoted by ``$\xrightarrow[]{w}$''--- of the empirical process 
$$\sqrt{n}\{C_{\upbeta,\alpha,n}(u_\upbeta)-C_{\upbeta,\alpha}(u_\upbeta)\}, \hspace{0.5cm} u_{\upbeta}\in [0,1]^{|\upbeta|},$$
it suffices to proceed as in \cite{Gar2013}, where the following two conditions are considered:

\begin{condition}\label{con1}
    For each $i\in\upbeta$, the $i$-th first-order partial derivative $\dot{C}_{i,\upbeta,\alpha}$ exists and is continuous on the set $\{u_\upbeta\in[0,1]^{|\upbeta|}:0<u_i<1\}$.
\end{condition}

\begin{condition}\label{con2}
$\mathbb{B}_{C_{\upbeta,\alpha}}(u_\upbeta)$ is a $C_{\upbeta,\alpha}$-tight centered Gaussian process on $[0,1]^{|\upbeta|}$. $u_{\upbeta}^{(i)}$ is a vector such that its $j$-th component, for $j \in \upbeta$, is:
$$(u_{\upbeta}^{(i)})_j=\begin{cases}
1 & \text{if } j \neq i \text{ when } \alpha_j = 1, \\
0 & \text{if } j \neq i \text{ when } \alpha_j = -1, \\
u_i & \text{if } j = i.
\end{cases}$$
The covariance function is $\mathbb{E}(\mathbb{B}_{C_{\upbeta,\alpha}}(u_\upbeta),\mathbb{B}_{C_{\upbeta,\alpha}}(v_\upbeta))=C_{\upbeta,\alpha}(w_\upbeta)-C_{\upbeta,\alpha}(u_\upbeta)C_{\upbeta,\alpha}(v_\upbeta)$, where the $j$-th component $(w_\upbeta)_j$ is:
$$(w_\upbeta)_j=\begin{cases}
\min(u_j,v_j) & \text{if } \alpha_j = 1, \\
\max(u_j,v_j) & \text{if } \alpha_j = -1,
\end{cases}$$
for all $j \in \upbeta$.
\end{condition}

From Conditions \ref{con1} and \ref{con2}, the following result is obtained, which is valid for any dimension $d$ and, therefore, guarantees the weak convergence of our process.

\begin{theorem}\label{th:proc}
    Let $H_{\upbeta,\alpha}$ be a $|\upbeta|$-dimensional distribution function, given by (\ref{H}) with continuous marginal distributions $F_i$, $i\in\upbeta$ and with $C_{\upbeta,\alpha}$ given by (\ref{C}), where $\upbeta\subseteq\{1,...,d\}$ and $\alpha\in\mathbb{R}^d$, $\alpha_i=\{-1,1\}$. Under Condition \ref{con1} on the function $C_{\upbeta,\alpha}$, when $n\longrightarrow\infty$
    $$\sqrt{n}\left\{C_{\upbeta,\alpha,n}(u_\upbeta)-C_{\upbeta,\alpha}(u_\upbeta)\right\}\xrightarrow[]{w}\mathbb{G}_{C_{\upbeta,\alpha}}(u_\upbeta).$$
    Weak convergence takes place in $l^\infty([0,1]^{|\upbeta|})$ and $\mathbb{G}_{C_{\upbeta,\alpha}}(u_\upbeta)=\mathbb{B}_{C_{\upbeta,\alpha}}(u_\upbeta)-\sum_{i\in\upbeta}\dot{C}_{i,\upbeta,\alpha}(u_\upbeta)\mathbb{B}_{C_{\upbeta,\alpha}}(u_\upbeta^{(i)})$, where $\mathbb{B}_{C_{\upbeta,\alpha}}$ is a $C_{\upbeta,\alpha}$-tight centered Gaussian process on $[0,1]^{|\upbeta|}$ and $\mathbb{G}_{C_{\upbeta,\alpha}}$ follows Condition \ref{con2}.
\end{theorem}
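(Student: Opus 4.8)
The plan is to recognize $\sqrt{n}\{C_{\upbeta,\alpha,n}-C_{\upbeta,\alpha}\}$ as a \emph{directional} empirical copula process and to reduce it to the classical (all-$\le$) empirical copula process by coordinate reflections, after which the functional delta method delivers the limit. First I would dispose of the directions: for each index $i\in\upbeta$ with $\alpha_i=-1$, the change of variable $U_i\mapsto 1-U_i$ (equivalently, replacing $R_{ij}$ by $\overline{R}_{ij}=n+1-R_{ij}$) is a measure-preserving homeomorphism of $[0,1]$. Applying it coordinatewise turns every constraint $\alpha_iX_i\le\alpha_ix_i$ into an ordinary lower-orthant constraint $Y_i\le y_i$, so that $H_{\upbeta,\alpha}$ becomes the ordinary distribution function of the reflected $|\upbeta|$-margin, $C_{\upbeta,\alpha}$ becomes its genuine copula, and $C_{\upbeta,\alpha,n}$ becomes its ordinary (rank-based) empirical copula. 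This reduces the whole statement to the standard case $\alpha=\mathbf 1$ and explains the dichotomy in Condition \ref{con2}: coordinates with $\alpha_j=1$ contribute a $\min(u_j,v_j)$ term, while reflected coordinates with $\alpha_j=-1$ contribute $\max(u_j,v_j)=1-\min(1-u_j,1-v_j)$.

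Next I would install the delta-method machinery on the reflected margin. By the multivariate Donsker theorem, the empirical process $\mathbb B_n:=\sqrt n\{\widetilde C_{\upbeta,\alpha,n}-C_{\upbeta,\alpha}\}$ built from the \emph{unobservable} true uniform observations converges weakly in $l^\infty([0,1]^{|\upbeta|})$ to the tight centred Gaussian process $\mathbb B_{C_{\upbeta,\alpha}}$ whose covariance is exactly the one recorded in Condition \ref{con2}. Condition \ref{con1}, namely the existence and continuity of the first-order partials $\dot C_{i,\upbeta,\alpha}$ on $\{u_\upbeta:0<u_i<1\}$, is precisely the minimal-smoothness hypothesis under which the copula map is Hadamard differentiable tangentially to the continuous functions, in the sense of Segers.

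I would then perform the first-order expansion. Replacing the true uniform margins by the rank-based pseudo-observations $U_{ij}=R_{ij}/(n+1)$ and Taylor-expanding to first order yields
$$\sqrt n\{C_{\upbeta,\alpha,n}-C_{\upbeta,\alpha}\}(u_\upbeta)=\mathbb B_n(u_\upbeta)-\sum_{i\in\upbeta}\dot C_{i,\upbeta,\alpha}(u_\upbeta)\,\mathbb B_n(u_\upbeta^{(i)})+o_{\mathbb P}(1),$$
where $u_\upbeta^{(i)}$ is the boundary point introduced in Condition \ref{con2}. The joint weak convergence of the leading term and the correction terms, together with the continuous mapping theorem, then produces the claimed limit $\mathbb G_{C_{\upbeta,\alpha}}=\mathbb B_{C_{\upbeta,\alpha}}-\sum_{i\in\upbeta}\dot C_{i,\upbeta,\alpha}\,\mathbb B_{C_{\upbeta,\alpha}}(\cdot^{(i)})$ in $l^\infty([0,1]^{|\upbeta|})$.

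The hard part will be controlling the remainder $o_{\mathbb P}(1)$ uniformly on $[0,1]^{|\upbeta|}$ under Condition \ref{con1} alone, since the partials $\dot C_{i,\upbeta,\alpha}$ need not extend continuously to the faces $\{u_i\in\{0,1\}\}$ and may blow up toward the boundary. This is handled exactly as in the minimal-smoothness argument invoked in \cite{Gar2013}: one splits the cube into an interior region, where the Taylor remainder is tamed by the uniform continuity of $\dot C_{i,\upbeta,\alpha}$, and a shrinking boundary layer, where the asymptotic equicontinuity of $\mathbb B_n$ and the Lipschitz bound $|C_{\upbeta,\alpha}(u)-C_{\upbeta,\alpha}(v)|\le\sum_{i}|u_i-v_i|$ force the contribution to be negligible. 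Since no step of this argument uses $|\upbeta|=3$, the reflection reduction together with the delta method carries over verbatim to arbitrary dimension $d$, which gives the result.
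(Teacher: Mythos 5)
Your argument is correct and follows essentially the same route the paper relies on: the paper gives no proof of Theorem \ref{th:proc}, simply asserting that "it suffices to proceed as in \cite{Gar2013}", and your reflection-to-the-standard-case reduction followed by the Fermanian--Radulovic--Wegkamp/Segers delta-method expansion is precisely what that deferral amounts to, here spelled out for arbitrary $|\upbeta|$. Your sketch in fact supplies more detail than the paper itself (notably the boundary-layer control of the remainder under Condition \ref{con1} alone), and nothing in it is dimension-specific, so it is consistent with the paper's claim.
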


\subsubsection{Properties of the estimators}\label{sect:prop}
Our main objective in this section is to prove that the estimators we have defined are unbiased. To do so, first, we must emphasize that our estimators verify the following:
\begin{equation}\label{eq:concl}
    \widehat{\rho}_d^\alpha=\frac{\frac{1}{n}(n+1)^d}{\frac{1}{n}\sum_{j=1}^nj^d-(\frac{n+1}{2})^d}\int_{\mathbb{I}^d}C_{\upbeta,\alpha,n}(u_\upbeta){\rm d}u_\upbeta-\frac{(\frac{n+1}{2})^d}{\frac{1}{n}\sum_{j=1}^nj^d-(\frac{n+1}{2})^d}, \hspace{0.5cm}\text{ for }\upbeta=\{1,\ldots,d\},
\end{equation}
since $$\int_{|\upbeta|}C_{\upbeta,\alpha,n}(u_\upbeta)du_\upbeta=\frac{1}{(n+1)^{|\upbeta|}}\sum_{j=1}^n\prod_{i\in\upbeta}R_{ij}^{\alpha_i}.$$

In order to prove that our estimators are unbiased we will use the following theorem from \cite{Gar2013}, which is an adaptation of \cite[Theorem 6]{Fer2004} and \cite{Gan87}.

\begin{theorem}\label{th:proc2}
    Under the assumptions of Theorem \ref{th:proc}, let  $\{a_n\}_{n\geq 1}$ and $\{b_n\}_{n\geq 1}$ be two sequences of real numbers satisfying $\sqrt{n}(a_n-a_0)=O(n^{-1/2})$ and $\sqrt{n}(b_n-b_0)=O(n^{-1/2})$, respectively, where $a_0$ and $b_0$ are constant values. Let $$T_n(f)=a_n\int_{I^{|\upbeta|}}f(u_\upbeta)du_\upbeta+b_n,$$
    for $n\geq 0$, where $f$ is a $|\upbeta|$-integrable function. Then, when $n\longrightarrow\infty$,
    $$\sqrt{n}\{T_n(C_{\upbeta,\alpha,n})-T_0(C_{\upbeta,\alpha})\}\xrightarrow{w}Z_{C_{\upbeta,\alpha}}\sim N(0,\sigma_{C_{\upbeta,\alpha}}^2),$$
    with $$\sigma_{C_{\upbeta,\alpha}}^2=a_0^2\int_{I^{|\upbeta|}}\int_{I^{|\upbeta|}}\mathbb{E}[\mathbb{G}_{C_{\upbeta,\alpha}}(u_\upbeta)\mathbb{G}_{C_{\upbeta,\alpha}}(v_\upbeta)]du\upbeta dv\upbeta$$ and $$Z_{C_{\upbeta,\alpha}}:=a_0\int_{I^{|\upbeta|}}\mathbb{G}_{C_{\upbeta,\alpha}}(u_\upbeta)du_\upbeta.$$
\end{theorem}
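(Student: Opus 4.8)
The plan is to prove Theorem \ref{th:proc2} by a functional delta-type argument, isolating the single stochastically non-negligible contribution, which is exactly the integrated empirical process already controlled in Theorem \ref{th:proc}. First I would perform an algebraic decomposition. Writing $\widehat{\mu}_n=\int_{I^{|\upbeta|}}C_{\upbeta,\alpha,n}(u_\upbeta)\,du_\upbeta$ and $\mu_0=\int_{I^{|\upbeta|}}C_{\upbeta,\alpha}(u_\upbeta)\,du_\upbeta$, one has the identity $T_n(C_{\upbeta,\alpha,n})-T_0(C_{\upbeta,\alpha})=a_n(\widehat{\mu}_n-\mu_0)+(a_n-a_0)\mu_0+(b_n-b_0)$. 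Multiplying by $\sqrt{n}$ produces three summands: the main term $a_n\sqrt{n}(\widehat{\mu}_n-\mu_0)$, together with the deterministic terms $\sqrt{n}(a_n-a_0)\mu_0$ and $\sqrt{n}(b_n-b_0)$, which by hypothesis are of order $O(n^{-1/2})$ and hence tend to $0$.

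The core step is the treatment of the main term. The integration functional $\Phi:f\mapsto\int_{I^{|\upbeta|}}f(u_\upbeta)\,du_\upbeta$ is a bounded linear, hence continuous, functional on $l^\infty([0,1]^{|\upbeta|})$, since $[0,1]^{|\upbeta|}$ has finite Lebesgue measure and thus $|\Phi(f)|\le\|f\|_\infty$. Because Theorem \ref{th:proc} gives $\sqrt{n}(C_{\upbeta,\alpha,n}-C_{\upbeta,\alpha})\xrightarrow{w}\mathbb{G}_{C_{\upbeta,\alpha}}$ in $l^\infty([0,1]^{|\upbeta|})$, the continuous mapping theorem yields $\sqrt{n}(\widehat{\mu}_n-\mu_0)=\Phi\bigl(\sqrt{n}(C_{\upbeta,\alpha,n}-C_{\upbeta,\alpha})\bigr)\xrightarrow{w}\Phi(\mathbb{G}_{C_{\upbeta,\alpha}})=\int_{I^{|\upbeta|}}\mathbb{G}_{C_{\upbeta,\alpha}}(u_\upbeta)\,du_\upbeta$. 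Since $a_n\to a_0$ and the two deterministic remainders vanish, Slutsky's theorem then gives $\sqrt{n}\{T_n(C_{\upbeta,\alpha,n})-T_0(C_{\upbeta,\alpha})\}\xrightarrow{w}a_0\int_{I^{|\upbeta|}}\mathbb{G}_{C_{\upbeta,\alpha}}(u_\upbeta)\,du_\upbeta=Z_{C_{\upbeta,\alpha}}$.

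To finish, I would identify the law of the limit. As $\mathbb{G}_{C_{\upbeta,\alpha}}$ is a centered Gaussian process, the continuous linear functional $\int_{I^{|\upbeta|}}\mathbb{G}_{C_{\upbeta,\alpha}}\,du_\upbeta$ is a centered Gaussian random variable, so $Z_{C_{\upbeta,\alpha}}$ is $N(0,\sigma^2)$; computing its variance and exchanging expectation with integration by Fubini gives $\sigma^2=a_0^2\,\mathbb{E}\bigl[(\int\mathbb{G}_{C_{\upbeta,\alpha}}\,du_\upbeta)^2\bigr]=a_0^2\int_{I^{|\upbeta|}}\int_{I^{|\upbeta|}}\mathbb{E}[\mathbb{G}_{C_{\upbeta,\alpha}}(u_\upbeta)\mathbb{G}_{C_{\upbeta,\alpha}}(v_\upbeta)]\,du_\upbeta\,dv_\upbeta=\sigma^2_{C_{\upbeta,\alpha}}$, which is the asserted value.

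The delicate point, and where the technical bookkeeping lives, is the rigorous application of the continuous mapping theorem in the non-separable space $l^\infty([0,1]^{|\upbeta|})$, where weak convergence must be understood in the Hoffmann--J\"orgensen sense for possibly non-measurable maps. One has to check that $\Phi$ is genuinely continuous there (immediate from the sup-norm bound) and that the limit process admits sample paths on which $\Phi$ and the Fubini interchange are well defined. These verifications are standard in the empirical-copula-process literature, and together with the elementary decomposition and Slutsky's theorem they complete the argument.
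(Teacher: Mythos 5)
Your argument is correct. Note, however, that the paper does not actually prove this statement: it is quoted verbatim from Garc\'ia, Gonz\'alez-L\'opez and Nelsen (2013), which in turn adapts Theorem 6 of Fermanian, Radulovic and Wegkamp (2004) and Gaenssler--Stute (1987), so there is no in-paper proof to compare against. Your route --- the decomposition $T_n(C_{\upbeta,\alpha,n})-T_0(C_{\upbeta,\alpha})=a_n(\widehat{\mu}_n-\mu_0)+(a_n-a_0)\mu_0+(b_n-b_0)$, killing the two deterministic remainders via the $O(n^{-1/2})$ hypotheses, applying the continuous mapping theorem to the bounded linear integration functional on $\ell^\infty([0,1]^{|\upbeta|})$ together with Theorem \ref{th:proc}, invoking Slutsky, and then identifying the Gaussian limit and its variance by Fubini --- is exactly the standard argument behind the cited result, so in effect you have supplied the proof the paper outsources. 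The only point deserving care is the one you already flag: integration is not defined on all of $\ell^\infty([0,1]^{|\upbeta|})$ (non-measurable elements), so the continuous mapping step should be phrased either on the closed subspace of bounded measurable functions containing $C_{\upbeta,\alpha,n}$, $C_{\upbeta,\alpha}$ and the sample paths of $\mathbb{G}_{C_{\upbeta,\alpha}}$, or in the Hoffmann--J{\o}rgensen framework with outer probabilities; with that caveat made explicit the proof is complete.
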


As a consequence, we have the following result.

\begin{corollary}\label{col:unb}
    Under assumptions of Theorem \ref{th:proc}, when $n\longrightarrow\infty$
    $$\sqrt{n}\{\widehat{\rho}_d^\alpha-\rho_d^\alpha\}\xrightarrow{w}Z_{C_{\upbeta,\alpha}}\sim N(0,\sigma_{C_{\upbeta,\alpha}}^2),$$
    where $\upbeta=\{1,2,\ldots,d\}$ and $\alpha\in\mathbb{R}^d$ is a vector such that $\alpha_i\in\{-1,1\}$ for all $i=1,\ldots,d$.
\end{corollary}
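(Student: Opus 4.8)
The plan is to recognize $\widehat{\rho}_d^{\alpha}$ as a special case of the linear statistic $T_n$ of Theorem~\ref{th:proc2} and then read off the stated limit. For $\upbeta=\{1,\ldots,d\}$, identity \eqref{eq:concl} already displays the affine representation
$$\widehat{\rho}_d^{\alpha}=a_n\int_{\mathbb{I}^d}C_{\upbeta,\alpha,n}(u_\upbeta)\,{\rm d}u_\upbeta+b_n,\qquad a_n=\frac{\tfrac1n(n+1)^d}{D_n},\quad b_n=-\frac{(\tfrac{n+1}{2})^d}{D_n},$$
where $D_n=\tfrac1n\sum_{j=1}^n j^d-(\tfrac{n+1}{2})^d$. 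Hence $\widehat{\rho}_d^{\alpha}=T_n(C_{\upbeta,\alpha,n})$ for these sequences $\{a_n\}$, $\{b_n\}$, and the corollary follows the moment we check that $\{a_n\}$, $\{b_n\}$ satisfy the hypotheses of Theorem~\ref{th:proc2} and that the centering $T_0(C_{\upbeta,\alpha})$ coincides with $\rho_d^\alpha(C)$.

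First I would verify the two rate conditions $\sqrt{n}\,(a_n-a_0)=O(n^{-1/2})$ and $\sqrt{n}\,(b_n-b_0)=O(n^{-1/2})$, i.e. $a_n-a_0=O(1/n)$ and $b_n-b_0=O(1/n)$. This is precisely the asymptotic expansion already carried out in the Remark following Theorem~\ref{th:est}: using $\sum_{j=1}^n j^d=\tfrac{n^{d+1}}{d+1}+\tfrac{n^d}{2}+O(n^{d-1})$ together with $(n+1)^d=n^d+dn^{d-1}+O(n^{d-2})$, the normalizing ratio $(n+1)^d/D_n$ converges to $\tfrac{2^d(d+1)}{2^d-(d+1)}$, which fixes the constants $a_0$ and $b_0=-\tfrac{d+1}{2^d-(d+1)}$; expanding numerator and denominator one order further shows the remainder in each ratio is $O(1/n)$, so both rate hypotheses hold. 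The remaining assumption of Theorem~\ref{th:proc2}, namely Condition~\ref{con1} on $C_{\upbeta,\alpha}$ that underlies the weak convergence $\sqrt{n}\{C_{\upbeta,\alpha,n}-C_{\upbeta,\alpha}\}\xrightarrow{w}\mathbb{G}_{C_{\upbeta,\alpha}}$ of Theorem~\ref{th:proc}, is part of the standing hypotheses of the corollary.

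Next I would pin down the centering constant $T_0(C_{\upbeta,\alpha})=a_0\int_{\mathbb{I}^d}C_{\upbeta,\alpha}(u_\upbeta)\,{\rm d}u_\upbeta+b_0$ and identify it with $\rho_d^\alpha(C)$. The cleanest route is to invoke the deterministic plug-in limit $\lim_n\widehat{\rho}_d^{\alpha}=\rho_d^\alpha(C)$ already established in the Remark following Theorem~\ref{th:est}: since $\widehat{\rho}_d^{\alpha}=T_n(C_{\upbeta,\alpha,n})$ while $a_n\to a_0$, $b_n\to b_0$ and $\int_{\mathbb{I}^d}C_{\upbeta,\alpha,n}\to\int_{\mathbb{I}^d}C_{\upbeta,\alpha}$ in probability (consistency being implied by Theorem~\ref{th:proc}), we get $T_n(C_{\upbeta,\alpha,n})\to T_0(C_{\upbeta,\alpha})$, and uniqueness of the limit forces $T_0(C_{\upbeta,\alpha})=\rho_d^\alpha(C)$. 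As a direct cross-check one can instead evaluate $\int_{\mathbb{I}^d}C_{\upbeta,\alpha}(u_\upbeta)\,{\rm d}u_\upbeta$ from definition \eqref{C} (with $\upbeta=\{1,\ldots,d\}$ and uniform marginals) by integrating coordinatewise, and then relate the resulting expectation to the defining integral of $\rho_d^\alpha(C)$ displayed after \eqref{def:rhoco}.

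With the rate conditions verified and $T_0(C_{\upbeta,\alpha})=\rho_d^\alpha(C)$ identified, Theorem~\ref{th:proc2} applies directly and yields $\sqrt{n}\{T_n(C_{\upbeta,\alpha,n})-T_0(C_{\upbeta,\alpha})\}\xrightarrow{w}Z_{C_{\upbeta,\alpha}}\sim N(0,\sigma^2_{C_{\upbeta,\alpha}})$; substituting $T_n(C_{\upbeta,\alpha,n})=\widehat{\rho}_d^{\alpha}$ and $T_0(C_{\upbeta,\alpha})=\rho_d^\alpha$ is exactly the assertion of the corollary, and the mean-zero Gaussian limit centered at $\rho_d^\alpha$ gives the claimed asymptotic unbiasedness. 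I expect the main obstacle to lie in the direct cross-check of the third step: matching $\int_{\mathbb{I}^d}C_{\upbeta,\alpha}$ to the defining integral of $\rho_d^\alpha$ requires careful bookkeeping of which coordinates carry the reflection $u_i\mapsto 1-u_i$ built into \eqref{C}, of the directional split $i\in I$ versus $i\in J$, and of the fact that strict versus non-strict one-sided events are immaterial under continuous marginals; the rate conditions, by contrast, reduce to the routine expansion already recorded before the statement.
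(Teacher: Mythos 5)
Your proposal follows essentially the same route as the paper: write $\widehat{\rho}_d^{\alpha}=T_n(C_{\upbeta,\alpha,n})$ via \eqref{eq:concl}, verify the rate conditions $\sqrt{n}(a_n-a_0)=O(n^{-1/2})$ and $\sqrt{n}(b_n-b_0)=O(n^{-1/2})$ from the expansion of $\sum_{j=1}^n j^d$, and invoke Theorem \ref{th:proc2}. Your additional care in identifying the centering constant $T_0(C_{\upbeta,\alpha})$ with $\rho_d^\alpha(C)$ (and your explicit sign convention $b_n<0$ to match the subtraction in \eqref{eq:concl}) is a welcome clarification of a step the paper leaves implicit, but it is not a different argument.
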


\begin{proof}
    We consider the real number sequences in Theorem \ref{th:proc2} as $$a_n=\frac{\frac{1}{n}(n+1)^d}{\frac{1}{n}\sum_{j=1}^nj^d-(\frac{n+1}{2})^d},\quad a_0=\frac{2^d(d+1)}{2^d-(d+1)},\quad b_n=\frac{(\frac{n+1}{2})^d}{\frac{1}{n}\sum_{j=1}^nj^d-(\frac{n+1}{2})^d}\quad\text{and}\quad b_0=\frac{d+1}{2^d-(d+1)}.$$
    We are going to check that the conditions $\sqrt{n}(a_n-a_0)=O(n^{-1/2})$ and $\sqrt{n}(b_n-b_0)=O(n^{-1/2})$ hold. We start by using the fact that
    $$\sum_{j=1}^nj^d=\frac{n^{d+1}}{d+1}+O(n^d).$$
    Therefore, after some tedious calculations, we obtain 
    \begin{align*}
        \sqrt{n}(a_n-a_0)=&\sqrt{n}\left(\frac{n^d+O(n^{d-1})}{n^d/(d+1)+O(n^{d-1})-\left(\frac{n+1}{2}\right)^d}-\frac{2^d(d+1)}{2^d-(d+1)}\right)\\
        &=\sqrt{n}\left(\frac{O(n^{d-1})}{O(n^d)}\right)\\
        &=O(n^{-1/2}).
    \end{align*}
    Analogously, 
    \begin{align*}
        \sqrt{n}(b_n-b_0)=&\sqrt{n}\left(\frac{(\frac{n+1}{2})^d}{n^d/(d+1)+O(n^{d-1})-\left(\frac{n+1}{2}\right)^d}-\frac{d+1}{2^d-(d+1)}\right)\\
        &=\sqrt{n}\left(\frac{O(n^{d-1})}{O(n^d)}\right)\\
        &=O(n^{-1/2}).
    \end{align*}
    So, using (\ref{eq:concl}), the desired result is obtained.
\end{proof}

By Corollary \ref{col:unb}, the estimators $\widehat{\rho}_d^\alpha$ are asymptotically unbiased and Var$(\widehat{\rho}_d^\alpha)\longrightarrow 0$ whenever $n\longrightarrow\infty$. Therefore, $\widehat{\rho}_d^\alpha$ is also an asymptotically consistent estimator since convergence in probability, $\widehat{\rho}_d^\alpha\xrightarrow{p}\rho_d^\alpha$, is guaranteed  as a consequence of Chebyshev's inequality.

\subsection{Simulations}

 To test the performance of these estimators, we have carried out Monte Carlo simulations for $d$-copulas of the parametric Clayton family, whose expression, with $\theta\in [0,\infty)$, is given by
 $$C_\theta(u_1,\ldots,u_d)=\left(\sum_{i=1}^d u_i^{-\theta}-d+1\right)^{-\frac{1}{\theta}}, \hspace{0.5cm}\forall \uu\in[0,1]^d$$
(see \cite{Ne06}). We consider two dimensions, $d=\{3,4\}$, and four sample sizes $n=\{20,50,100,500\}$. We have generated $1000$ Monte Carlo replicates of size $n$ for each parameter value and for each direction. In each of these cases, we have calculated the estimators, $\widehat{\rho}_d^\alpha$, of the directional $\rho$-coefficients, $\rho_d^\alpha$. Once obtained, we have calculated their mean, which appears in the tables presented. They also show an approximate value of the corresponding directional $\rho$-coefficient calculated by numerical integration. Table \ref{tab:1} displays the estimations of $\rho_3^{(-1,1,1)}$ for the $3$-copula of Clayton with sample sizes $n=\{20,50,100,500\}$ and parameter values $\theta=\{0.4, 0.6, 1, 2, 5\}$ comparing them with their exact value. Table \ref{tab:2} displays the estimations of $\rho_3^{(-1,-1,1)}$ for the $3$-copula of Clayton with sample sizes $n=\{20,50,100,500\}$ and parameter values $\theta=\{0.4, 0.6, 1, 2, 5\}$ comparing them with their exact value. Table \ref{tab:3} displays the estimations of $\rho_4^{(-1,1,1,-1)}$ for the $4$-copula of Clayton with sample sizes $n=\{20,50,100,500\}$ and parameter values $\theta=\{0.4, 0.6, 1, 2, 5\}$ comparing them with their exact value.

Likewise, the values of $\rho_4^{(-1,1,1-1)}$ have been estimated by the expression of Theorem \ref{th:est}. Table 4 shows the obtained results. Displayed values correspond to the mean of the estimators $\widehat{\rho}_{14}^-, \widehat{\rho}_{124}^-, \widehat{\rho}_{134}^-, \widehat{\rho}_{1234}^-$ and $\widehat{\rho}_4^{(-1,1,1,-1)}$ of $1000$ Monte Carlo simulations.

Analyzing the tables, we can see that these estimators are a good approximation to the real value of the directional $\rho$-coefficients and more so the larger the sample size considered. In all considered cases, negative values have been obtained for these coefficients. However, by changing the direction, for the Clayton $d$-copula, positive values can also be obtained, closer to one the larger the value of the parameter $\theta$ is, as would be expected. It can also be seen that the values obtained by estimating $\widehat{\rho}_4^{(-1,1,1,-1)}$ through the expression of Theorem \ref{th:est} in Table \ref{tab:4} provide values closer to the real value of the coefficient $\rho_4^{(-1,1,1,-1)}$, although the difference with the values estimated in Table \ref{tab:3} is not directly significant.

 \begin{table}[t]
\begin{center}
\begin{tabular}{ c  c  c  c  c c c  }
\hline
\multicolumn{3}{ c }{Sample size} & $n=20$  & $n=50$ & $n=100$ & $n=500$ \\ \hline
$\theta$ & $\rho_d^{\alpha}$ & $\widehat{\rho}_d^{\alpha}$ & mean & mean & mean & mean \\ \hline
0.4 & -0.0726 & $\widehat{\rho}_d^{(-1,1,1)}$ & -0.0740 & -0.0697 & -0.0735 & -0.0714\\
0.6 & -0.0969 &  $\widehat{\rho}_d^{(-1,1,1)}$ & -0.0924 & -0.0943 & -0.0966 &  -0.0969\\
1 & -0.1338  &  $\widehat{\rho}_d^{(-1,1,1)}$ &  -0.1313 &-0.1327 & -0.1355 & -0.1347\\
2 &  -0.1906 & $\widehat{\rho}_d^{(-1,1,1)}$ & -0.1895 & -0.1920 & -0.1917 & -0.1928 \\
5 & -0.2684 &  $\widehat{\rho}_d^{(-1,1,1)}$ & -0.2609 & -0.2661 & -0.2673 & -0.2686\\ \hline
\end{tabular}
\caption{Results obtained in Monte Carlo simulations in order to calculate the value of $\widehat{\rho}_d^{(-1,1,1)}$ based on a total of $1000$ samples of size $n$ generated for the Clayton $3$-copula for different values of the parameter $\theta$.} 
\label{tab:1}
\end{center}
\end{table}

 \begin{table}[t]
\begin{center}
\begin{tabular}{ c  c  c  c  c c c  }
\hline
\multicolumn{3}{ c }{Sample size} & $n=20$  & $n=50$ & $n=100$ & $n=500$ \\ \hline
$\theta$ & $\rho_d^{\alpha}$ & $\widehat{\rho}_d^{\alpha}$ & mean & mean & mean & mean \\ \hline
0.4 & -0.0919 & $\widehat{\rho}_d^{(-1,-1,1)}$ & -0.0848 & -0.0906 & -0.0918  & -0.0919 \\
0.6 & -0.1287 &  $\widehat{\rho}_d^{(-1,-1,1)}$ & -0.1203  & -0.1284 & -0.1277  & -0.1278  \\
1 &  -0.1850 &  $\widehat{\rho}_d^{(-1,-1,1)}$ & -0.1778  & -0.1826 & -0.1827 & -0.1849 \\
2 & -0.2621  & $\widehat{\rho}_d^{(-1,-1,1)}$ & -0.2478 & -0.2580 & -0.2591 & -0.2612 \\
5 & -0.3212 &  $\widehat{\rho}_d^{(-1,-1,1)}$ & -0.3105 & -0.3162  & -0.3195 & -0.3207\\ \hline
\end{tabular}
\caption{Results obtained in Monte Carlo simulations in order to calculate the value of $\widehat{\rho}_d^{(-1,-1,1)}$ based on a total of $1000$ samples of size $n$ generated for the Clayton $3$-copula for different values of the parameter $\theta$.}
\label{tab:2}
\end{center}
\end{table}

 \begin{table}[h!]
\begin{center}
\begin{tabular}{ c  c  c  c  c c c  }
\hline
\multicolumn{3}{ c }{Sample size} & $n=20$  & $n=50$ & $n=100$ & $n=500$ \\ \hline
$\theta$ & $\rho_d^{\alpha}$ & $\widehat{\rho}_d^{\alpha}$ & mean & mean & mean & mean \\ \hline
0.4 & -0.0664 & $\widehat{\rho}_d^{(-1,1,1,-1)}$ & -0.0650 & -0.0661 & -0.0652  & -0.0670 \\
0.6 & -0.0876 &  $\widehat{\rho}_d^{(-1,1,1,-1)}$ & -0.0888  & -0.0880 & -0.0866  & -0.0874  \\
1 & -0.1176  &  $\widehat{\rho}_d^{(-1,1,1,-1)}$ & -0.1165 & -0.1180 & -0.1189 & -0.1169 \\
2 & -0.1583  & $\widehat{\rho}_d^{(-1,1,1,-1)}$ & -0.1612  & -0.1594 & -0.1590 & -0.1584 \\
5 & -0.1966 &  $\widehat{\rho}_d^{(-1,1,1,-1)}$ & -0.2017 &  -0.1985 & -0.1975 & -0.1967 \\ \hline
\end{tabular}
\caption{Results obtained in Monte Carlo simulations in order to calculate the value of $\widehat{\rho}_d^{(-1,1,1,-1)}$ based on a total of $1000$ samples of size $n$ generated for the Clayton $4$-copula for different values of the parameter $\theta$.}
\label{tab:3}
\end{center}
\end{table}

 \begin{table}[t]
\begin{center}
\begin{tabular}{ l c  c  c c c c c }
\hline
Sample size &$\theta$ & $\rho_4^{(-1,1,1-1)}$ & $\widehat{\rho}_{14}^-$ & $\widehat{\rho}_{124}^-$& $\widehat{\rho}_{134}^-$& $\widehat{\rho}_{1234}^-$ & $\widehat{\rho}_{4}^{(-1,1,1,-1)}$  \\ \hline
$n=20$& \multirow {4}{*}{0.4} & \multirow{4}{*}{-0.0664} & 0.2258 & 0.2244 & 0.2247 & 0.2068  & -0.0650 \\
$n=50$&  &  & 0.2357 & 0.2336 & 0.2294 & 0.2122  & -0.0658 \\
$n=100$&  &  & 0.2456 & 0.2393 & 0.2364 & 0.2163  & -0.0672 \\
$n=200$&  &  & 0.2471 & 0.2388 & 0.2375 & 0.2169  & -0.0664 \\
$n=20$ & \multirow{4}{*}{0.6} & \multirow{4}{*}{-0.0876} &  0.3220 & 0.31047  & 0.3062 & 0.2827  & -0.0828  \\
$n=50$ &  &  & 0.3376  & 0.3241 & 0.3190 & 0.2939  & -0.0862  \\
$n=100$ &  &  & 0.3373  & 0.3209  & 0.3199 & 0.2922  & -0.0860  \\
$n=200$ &  &  & 0.3376  & 0.3210  & 0.3223 & 0.2924  & -0.0878  \\
$n=20$ & \multirow{4}{*}{1} & \multirow{4}{*}{-0.1176}  &  0.4499 & 0.4312 & 0.4318 & 0.4006 & -0.1113 \\
$n=50$ &  &   &  0.4645 & 0.4441 & 0.4448 & 0.4116 & -0.1149 \\
$n=100$ &  &   & 0.4754  & 0.4510 & 0.4511 & 0.4140 & -0.1179 \\
$n=200$ &  &   & 0.4765 & 0.4520 & 0.4522 & 0.4156 & -0.1176 \\
$n=20$ & \multirow{4}{*}{2} & \multirow{4}{*}{-0.1583}  & 0.6593 & 0.6309  & 0.6312 & 0.5934 & -0.1545 \\
$n=50$ &  &   & 0.6680 & 0.6374  & 0.6350 & 0.5949 & -0.1570 \\
$n=100$ &  &   & 0.6743 & 0.6412  & 0.6418 & 0.6000 & -0.1582 \\
$n=200$ &  &   & 0.6773 & 0.6439  & 0.6443 & 0.6022 & -0.1583 \\
$n=20$ & \multirow{4}{*}{5} & \multirow{4}{*}{-0.1966} & 0.8630 & 0.8360 & 0.8368 & 0.8054 & -0.1923 \\ 
$n=50$ & & & 0.8732 & 0.8470 & 0.8473 & 0.8167 & -0.1944 \\ 
$n=100$ & & & 0.8792 & 0.8535 & 0.8534 & 0.8234 & -0.1954 \\ 
$n=200$ & & & 0.8816 & 0.8558 & 0.8562 & 0.8256 & -0.1965 \\ 
\hline
\end{tabular}
\caption{Results obtained in Monte Carlo simulations using the expression in Theorem \ref{th:est} in order to calculate the value of $\widehat{\rho}_d^{(-1,1,1,-1)}$ based on a total of $1000$ samples of size $n$ generated for the Clayton $4$-copula for different values of the parameter $\theta$.}
\label{tab:4}
\end{center}
\end{table}

\section{Applications}\label{sec:appl}
In this final section, we explore potential applications of the proposed directional dependence estimators in various fields, with a particular focus on health, climate, and rainfall studies. By leveraging these estimators, researchers can gain deeper insights into the directional dependencies that exist between different components of multivariate data, which are often overlooked in traditional analyses. In the context of health, for example, understanding the directional relationships between risk factors and health outcomes can improve predictive models. Similarly, in climate and rainfall studies, these estimators can be instrumental in analyzing how different atmospheric variables influence each other in a directional manner, thereby enhancing forecasting and decision-making processes. Through these applications, we highlight the practical value of directional dependence measures in addressing complex, real-world problems particularly relevant in finance and economics, as the following example illustrates.

\begin{example}
     In this example we will model the association of three companies: Intel (INTC), General Electrics (GE) and Microsoft (MSFT). To do this, we are going to use a database of daily log-returns of the three companies for five years (1996--2000) obtained from the QRM (Quantitative Risk Management) package of R \cite{Pfaff}. In Figure \ref{fig emp} we can see the three-dimensional scatterplot of the data available for the three companies, as well as two-dimensional scatterplots of the data involving only two companies.

      \begin{figure}[h!]
\centering
\begin{subfigure}[b]{0.4\linewidth}
\includegraphics[width=\linewidth]{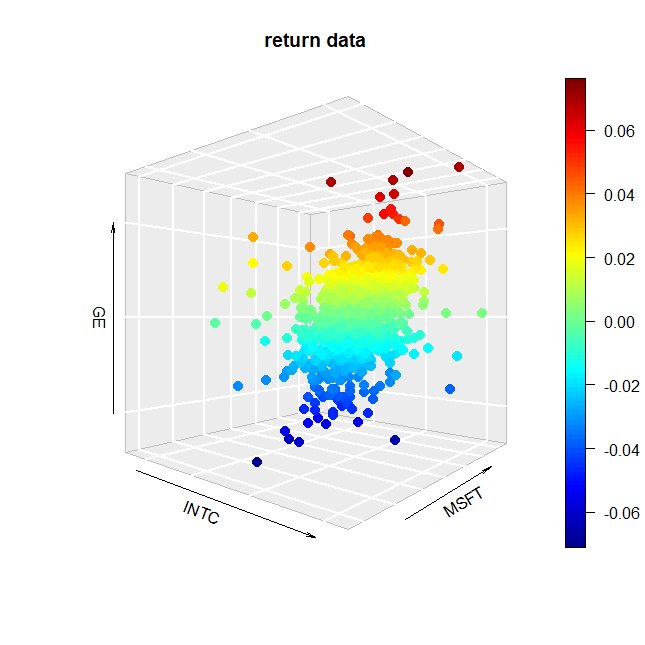}
\caption{$3$-dimensional scatterplot.}
\label{}
\end{subfigure}
\begin{subfigure}[b]{0.5\linewidth}
\includegraphics[width=\linewidth]{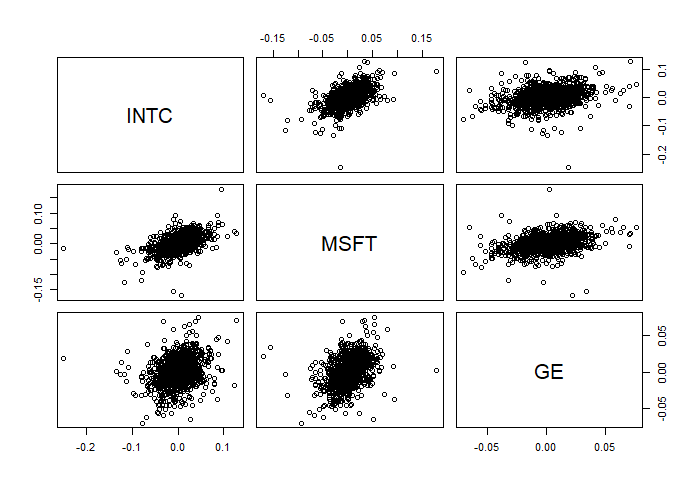}
\caption{$2$-dimensional scatterplots.}
\label{}
\end{subfigure}
\caption{Scatterplots of log-return data.}
\label{fig emp}
\end{figure}
To study the levels of directional association of these companies, the directional $\rho$-coefficients have been calculated for each of the possible directions of $\mathbb{R}^3$, obtaining Table \ref{tab:data}.

\begin{table}[h!]
\begin{center}
\begin{tabular}{ c  c  }
\hline
$\alpha$ & $\widehat{\rho}_3^\alpha$ \\ \hline
(-1,-1,-1) & 0.4400  \\
(1,1,1) & 0.4330   \\
(-1,1,1) & -0.1640\\
(1,-1,1) & -0.2060   \\
(1,1,-1) & -0.0525 \\ 
(-1-1,1) & -0.0605\\
(-1,1,-1) & -0.2160\\
(1,-1,-1) & -0.1741\\ \hline
\end{tabular}
\caption{Different values obtained for the directional estimators according to the data.}
\label{tab:data}
\end{center}
\end{table}

The largest values for the estimators are in the directions $(1,1,1)$ and $(-1,-1,-1)$, which can be interpreted as ``large" values of all three variables tend to occur simultaneously, and the same with ``small" values.
\end{example}

\section{Conclusion}\label{sec:conc}
In this paper, the authors have explored the use of copulas and directional $\rho$-coefficients as effective tools for measuring directional association between random variables. We have presented the generalization of directional $\rho$-coefficients to higher dimensions, highlighting their key properties and theoretical implications. Additionally, we addressed the previously posed conjecture in a more generalized manner, providing new insights into the behavior of these coefficients in multivariate settings. Nonparametric rank-based estimators were also introduced as a means to estimate the values of the directional $\rho$-coefficients from observed data, offering a robust and flexible approach to capturing dependency patterns in empirical datasets.

Studying directional dependence offers several advantages, particularly in capturing the directional nature of relationships between random variables. Unlike traditional measures of association, which often treat dependency symmetrically, directional coefficients allow us to distinguish how the relationship between random variables may differ in magnitude depending on the direction of change. This may be especially valuable in fields such as finance, economics, and environmental science, where understanding the direction of dependency can provide more accurate predictions and decision-making tools.

In conclusion, the exploration of directional dependence through copulas and $\rho$-coefficients offers valuable insights into the relationships between variables, with a wide range of potential applications across various scientific disciplines. Future work in this area promises to expand our understanding of complex dependencies and provide new tools for analyzing high-dimensional data.

\bigskip

\noindent
{\bf Acknowledgments}
\medskip

The first and third authors acknowledge the support of the research project PID2021-122657OB-I00
by the Ministry of Science, Innovation and Universities (Spain). The first author thanks to P\_FORT\_GRU-\\ POS\_2023/76, PPIT-UAL, Junta de Andaluc\'ia-ERDF 2021-2027. Programme: 54.A, and is also partially supported by the CDTIME of the University of Almería (Spain). Third author also acknowledges the support of P\_FORT\_GRUPOS\_2023/104, PPIT-UAL, Junta de Andaluc\'ia-ERDF 2021-2027, Objective RS01.1, Programme 54.A.

\end{document}